\documentclass[a4paper, portrait]{amsart}
\usepackage{amsmath}
\usepackage{graphicx}
\usepackage{hyperref}
\usepackage{tabularx}
\usepackage{float} 

\makeatletter
\newcommand{\addresseshere}{%
	\enddoc@text\let\enddoc@text\relax
}
\makeatother

\pagestyle{plain}
\usepackage{amsmath, amssymb, graphicx}
\usepackage[normalsize]{caption}
\usepackage{subcaption}

\newcommand{\beq}{\begin{equation}}\newcommand{\eeq}{\end{equation}}
\newcommand{\ba}{\begin{array}}
\newcommand{\ea}{\end{array}}
\newcommand{\bea}{\begin{eqnarray}}
\newcommand{\eea}{\end{eqnarray}}
\newtheorem{thm}{Theorem}[section]
\newtheorem{cor}[thm]{Corollary}
\newtheorem{lem}[thm]{Lemma}
\newtheorem{prop}[thm]{Proposition}

\newtheorem{defn}[thm]{Definition}
\newtheorem{rem}[thm]{Remark}
\newtheorem{assume}[thm]{ASSUMPTION}

\newcommand {\RR} {\mathbf{R}}

\newcommand{\set}[1]{\left\{#1\right\}}

\newcommand {\parx}{\frac{d}{dx}}
\renewcommand {\part}{\frac{\partial}{\partial t}}
\newcommand{\suml}{\sum\limits}

\newcommand{\fb}{\mathfrak{b}}
\newcommand{\fu}{\mathfrak{u}}
\newcommand{\fv}{\mathfrak{v}}
\newcommand{\fr}{\mathfrak{r}}
\newcommand{\fs}{\mathfrak{s}}
\newcommand{\fw}{\mathfrak{w}}
\newcommand{\fz}{\mathfrak{z}}
\newcommand{\fe}{\mathfrak{e}}
\newcommand{\fg}{\mathfrak{g}}

\newcommand{\fn}{\mathfrak{n}}
\newcommand{\fU}{\mathfrak{U}}

\numberwithin{equation}{section}

\title{Finite Difference Approach to Fourth-Order Linear Boundary-Value Problems}

\author{Matania Ben-Artzi}

\address{Matania Ben-Artzi: Institute of Mathematics, The Hebrew University, Jerusalem 91904, Israel}

\email{mbartzi@math.huji.ac.il}

\author{Benjamin Kramer}

\address{Benjamin Kramer: Department of Computer Science, The Hebrew University, Jerusalem 91904, Israel}

\email{benjamin.kramer@mail.huji.ac.il}

\thanks{It is a pleasure to thank Jean-Pierre Croisille and Dalia Fishelov for many helpful discussions. The scheme presented here, as well as the numerical examples, were included in the M.Sc. thesis of B. Kramer, supervised by M. Ben-Artzi and N. Linial.}

\subjclass[2010]{{Primary 65L10; Secondary 65L12, 34K28}}
\keywords{fourth-order ordinary differential equation, boundary conditions, high order discrete approximation}

\date{\today}

\begin{document}

	\begin{abstract}

		Discrete approximations to the equation
		\begin{equation*}
			L_{cont}u = u^{(4)} + D(x) u^{(3)} + A(x) u^{(2)} + (A'(x)+H(x)) u^{(1)} + B(x) u = f, \; x\in[0,1]
		\end{equation*}
		are considered. This is an extension of the Sturm-Liouville case $D(x)\equiv H(x)\equiv 0$ ~\cite{IMA_SL_paper} to the non-self-adjoint setting.

		The ``natural'' boundary conditions in the Sturm-Liouville case are the values of the function and its derivative. The inclusion of a third-order discrete derivative entails a revision of the underlying discrete functional calculus. This revision forces evaluations of accurate discrete approximations to the boundary values of the second, third and fourth order derivatives. The resulting functional calculus provides the discrete analogs of the fundamental Sobolev properties--compactness and coercivity. It allows to obtain a general convergence theorem of the discrete approximations to the exact solution.

		Some representative numerical examples are presented.

	\end{abstract}

	\maketitle

	\section{\textbf{INTRODUCTION}}
		A discrete functional calculus was developed in ~\cite{IMA_SL_paper}, involving derivatives up to fourth-order and establishing full analogs of the fundamental Sobolev properties (in particular, coercivity and compactness). It was applied to obtain an optimal convergence rate for numerical simulation of fourth-order Sturm-Liouville problems on an interval.

		In this article, we present an extension of the discrete calculus that is applicable for the numerical solution of general (namely, non- self-adjoint) fourth-order linear boundary-value problems.

		This extension is, somewhat surprisingly, non trivial. In fact, the main difficulty rests with the third-order derivative, that does not appear in the Sturm-Liouville case. In turn, its inclusion forces a revision of the entire discrete calculus, and in particular a high-order evaluation of approximate values of the second, third and fourth-order derivatives of the solution on the boundary. This should be compared to the Sturm-Liouville case, where the full construction relied solely on the ``natural'' boundary conditions, namely, the given boundary values of the function and its first order derivative.

		While the third-order derivative does not appear in self-adjoint problems, it may play an important role (for example, the convection term in the streamfunction formulation of the Navier-Stokes equations ~\cite{Navier_Stokes}). The third-order derivative term causes the differential operator to be non-self-adjoint, and complicates the analytical approach beyond the discrete elliptic theory that was introduced in \cite{IMA_SL_paper}.

		This work, as well as previous ones, relies on the notion of Hermitian derivatives, that was first introduced in the context of numerical solution of ordinary differential equations ~\cite[Chap. IV]{Collatz}). The first-order Hermitian derivative was used as the building block in the construction of discrete harmonic and biharmonic operators ~\cite[Chapter 10]{book}. The close relation of the discrete biharmonic operator to cubic splines was studied in \cite{Splines}. The high-order discrete derivatives, based on the first-order Hermitian derivative, are compact finite-difference operators.
		
		\bigskip
		We consider here discrete approximation to the equation
		\begin{align} \label{utxxxxcontgen}
			L_{cont}u = & \Big(\parx\Big)^4 u + D(x)\Big(\parx\Big)^3 u + A(x)\Big(\parx\Big)^2u \\ \nonumber + & (A'(x)+H(x))\Big(\parx\Big)u+B(x)u = f,\quad
			x\in\Omega=[0,1],
		\end{align}
		where $A(x), B(x), D(x) $ and $ H(x)$ are real functions, $A(x)\in C^1(\Omega)$ and \\
		$B(x), D(x), H(x)\in C(\Omega)$.
		The equation is supplemented with homogeneous boundary conditions
		\begin{equation} \label{eqbdrydata}
			u(0)=\parx u(0)=u(1)=\parx u(1)=0.
		\end{equation}

		Note that non-homogeneous boundary conditions are accommodated by a modification of the right-hand side function $f(x)$.

		The \textbf{Sturm-Liouville} case, namely, $D(x) \equiv H(x) \equiv 0$, was treated in detail in ~\cite{IMA_SL_paper}.
		For the general form $L_{cont}$ it is well-known ~\cite{Naimark_book} that the spectrum consists of a sequence of (not necessarily real) eigenvalues whose absolute values diverge to infinity. We impose the following assumption (that can always be validated by a small shift of $B(x)$),
		\begin{assume} \label{assumespLcont}
			Zero is not an eigenvalue of $L_{cont}.$
		\end{assume}

		The paper is organized as follows.
		
		In Section ~\ref{secdiscsetup} we recall the functional calculus presented in ~\cite{IMA_SL_paper} and extend it to the non-self-adjoint case. In particular the discrete third-order derivative is introduced.

		As mentioned above, the introduction of the third-order derivative cannot be accomplished without obtaining an adequate definition of discrete high-order derivatives at boundary and near-boundary grid points. Evaluating these derivatives can be regarded as an implementation of ``Dirichlet to Neumann'' operators, involving the natural data (i.e., boundary values of the unknown function and its derivative), coupled with functional values at interior points.
		
		Section ~\ref{secboundary} is devoted exactly to this task. It serves as the main technical tool in this paper. We recall that the reliance of high-order schemes on discrete accurate approximation of boundary values of higher derivatives is common; a well-known case is the need for boundary values of the vorticity when dealing with streamfunction-vorticity formulation of the Navier-Stokes equations ~\cite[Section 2]{WE}.

		Section ~\ref{seccompact} is concerned with supplementary material about the discrete functional calculus (developed in ~\cite{IMA_SL_paper}). More specifically, Corollary ~\ref{cordelta3conv} deals with an appropriate extension of the ``discrete Rellich theorem'', namely the convergence of the sequence of discrete third-order derivatives, that are absent in the self-adjoint case.

		The main object of this paper is the discrete equation ~\eqref{utxxxxdiscgen} approximating Equation ~\eqref{utxxxxcontgen}. It is presented in Section ~\ref{secdiscreteconvergence}. Building on the discrete functional properties, a general convergence theorem (Theorem ~\ref{thmconvgeneralscheme}) is established .

		In Section ~\ref{secnumerresults} we present and discuss two numerical test problems.

		\subsection{Related work}
			While there exists a vast amount of works on the numerical treatment of second-order boundary value problems, the literature on higher-order problems is considerably less extensive. A rigorous treatment of iterative methods, applicable also to nonlinear problems, is expounded in ~\cite{agarwal}. A numerical treatment of such methods is presented in ~\cite{xu}. In the framework of finite-element methods, we mention spline methods ~\cite{spline1,spline2} and Galerkin methods ~\cite{Galerkin2005,Galerkin2010}. A recent work ~\cite{RokhlinSecondKindIntegral} has addressed the same problem as the one studied here by means of finite difference methods. It uses second-kind integral equations and an iterative correction method to achieve optimal accuracy.

	\section{\textbf{DISCRETE FUNCTIONAL CALCULUS}} \label{secdiscsetup}

		We equip the interval $\Omega=[0,1]$ with a uniform grid
		\begin{equation*}
			x_j=jh,\quad 0\leq j\leq N,\quad h=\frac{1}{N}.
		\end{equation*}
		
		The approximation is carried out by grid functions $\fv$ defined on $\set{x_j,\,0\leq j\leq N}.$ The space of these grid functions is denoted by $l^2_h.$
		For their components we use either $\fv_j$ or $\fv(x_j).$

		For every smooth function $f(x)$ we define its associated grid function
		\begin{equation} \label{eqdefustar}
			f^\ast_j=f(x_j),\quad 0\leq j\leq N.
		\end{equation}
		The discrete $l^2_h$ scalar product is defined by
		\begin{equation*}
			(\fv,\fw)_h=h\suml_{j=0}^N\fv_j\fw_j,
		\end{equation*}
		and the corresponding norm is
		\begin{equation} \label{eqdefl2norm}
			|\fv|_h^2=h\suml_{j=0}^N\fv_j^2.
		\end{equation}

		For linear operators $\mathcal{A}:l^2_h\to l^2_h$ we use $|\mathcal{A}|_h$ to denote the operator norm.	
		The discrete sup-norm is
		\begin{equation} \label{eqdeflinfnorm}
			|\fv|_\infty=\max_{0\leq j\leq N}\set{|\fv_j|}.
		\end{equation}

		The discrete homogeneous space of grid functions is defined by
		\begin{equation*}
			l^2_{h,0}=\set{\fv,\,\,\fv_0=\fv_N=0}.
		\end{equation*}

		Given $\fv\in l^2_{h}$ we introduce the basic (central) finite difference operators
		\begin{gather}
			\label{eq_delta_op}
			(\delta_x\fv)_j=\frac{1}{2h}(\fv_{j+1}-\fv_{j-1}),\quad 1\leq j\leq N-1,\\
			\label{eq_delta2_op}
			(\delta^2_x\fv)_j=\frac{1}{h^2}(\fv_{j+1}-2\fv_j+\fv_{j-1}),\quad 1\leq j\leq N-1,
		\end{gather}

		The cornerstone of our approach to finite difference operators is the introduction of the \textbf{Hermitian derivative} of $\fv\in l^2_{h,0},$ that will replace $\delta_x.$ It will serve not only in approximating (to fourth-order of accuracy) first-order derivatives, but also as a fundamental building block in the construction of finite difference approximations to higher-order derivatives.

		First, we introduce the ``Simpson operator''
		\begin{equation} \label{eqdefsigmasimpson}
			(\sigma_x\fv)_j=\frac16\fv_{j-1}+\frac23\fv_{j}+\frac16\fv_{j+1},\quad 1\leq j\leq N-1.
		\end{equation}

		Note the operator relation (valid in $l^2_{h,0}$)	
		\begin{equation}
			\sigma_x=I+\frac{h^2}{6}\delta_x^2,
		\end{equation}
		so that $\sigma_x$ is an ``approximation to identity'' in the following sense.

		Let $\psi\in C^\infty_0(\Omega),$ then
		\begin{equation} \label{eqsigmaapproxj}
			|(\sigma_x-I)\psi^\ast|_{\infty}\leq C h^2\|\psi''\|_{L^\infty(\Omega)},
		\end{equation}
		which yields
		\begin{equation} \label{eqsigmaapproxh}
			|(\sigma_x-I)\psi^\ast|_h\leq C h^2\|\psi''\|_{L^\infty(\Omega)}.
		\end{equation}
		In the above estimates the constant $C>0$ is independent of $h$, $\psi$.

		The Hermitian derivative $\fv_x$ is now defined by
		\begin{equation} \label{eqdefhermit}
			(\sigma_x\fv_x)_j=(\delta_x\fv)_j,\quad 1\leq j\leq N-1.
		\end{equation}

		In analogy with the operator notation $\delta_x\fv$ we shall find it convenient to use the operator notation
		\begin{equation}\label{eqoptilddelta}
			\widetilde{\delta_x}\fv=\fv_x.			
		\end{equation}

		The truncation error for $\widetilde{\delta_x}$ (see \eqref{eq_her_der_trunc_error}) is
		\begin{equation} \label{eqtrunc1st}
			(\widetilde{\delta_x} u^\ast)_j = u^{(1)}(x_j) + O(h^4), \quad \forall j\in\{1,...,N-1\}.
		\end{equation}

		\begin{rem} \label{remzerodiscderiv}
			In the definition ~\eqref{eqdefhermit}, the values of $(\fv_x)_j,\,\,j=0,N,$ need to be provided, in order to make sense of the left-hand side (for $j=1,N-1$). If not otherwise specified, \textit{we shall henceforth assume that, in accordance with the boundary condition ~\eqref{eqbdrydata}}, $\fv_x\in l^2_{h,0},$ namely
			$$(\fv_x)_0=(\fv_x)_{N}=0.$$

			\textit{In particular, the linear correspondence $ l^2_{h,0}\ni \fv\to \fv_x\in l^2_{h,0}$ is well defined, but not onto, since $\delta_x$ has a non-trivial kernel.}
		\end{rem}

		Based on the Hermitian derivative, discrete versions of the second, third and fourth-order derivatives are introduced as follows. The precise estimates of the truncation errors are stated in Subsection ~\ref{sub_section_error_analysis} below.

		\begin{itemize}
			\item A higher-order replacement to the operator $\delta^2_x$ (see ~\cite[Equation (15)]{optimal-paper}, ~\cite[Equation (10.50)(c)]{book}) is defined by
			\begin{equation} \label{eqdefdelta2tild}
				(\widetilde{\delta_x^2}\fv)_j=2(\delta_x^2\fv)_j-(\delta_x\fv_x)_j,\quad 1\leq j\leq N-1.
			\end{equation}

			Note that, in accordance with Remark ~\ref{remzerodiscderiv} the operator $\widetilde{\delta}_x^2$ is defined on grid functions $\fv\in l^2_{h,0},$ such that also $\fv_x\in l^2_{h,0}.$

			\item The discrete third-order operator is defined by
			\begin{equation} \label{eqdef3rd}
				(\delta_x^3\fv)_j=2(\delta_x^2\fv_x)_j-(\delta_x\widetilde{\delta_x^2}\fv)_j, \quad 1\leq j\leq N-1.
			\end{equation}

			Clearly, to make sense of this definition at near-boundary points $j=1,N-1,$ the boundary values $(\widetilde{\delta_x^2}\fv)_0$ and $(\widetilde{\delta_x^2}\fv)_N$ must be given. They are derived by a (coupled) calculation that is detailed below in Section ~\ref{secboundary}.

			\item The biharmonic discrete operator is given by (for $\fv,\,\fv_x\in l^2_{h,0}),$
			\begin{equation} \label{eq_delta4_op}
				(\delta_x^4\fv)_j=\frac{12}{h^2}[\delta_x\fv_x-\delta_x^2\fv]_j,\quad 1\leq j\leq N-1.
			\end{equation}

			Note that the connection between the two difference operators for the second-order derivative is given by
			\begin{equation} \label{twosecondorder}
				-\widetilde{\delta_x}^2=-\delta_x^2 +\frac{h^2}{12} \delta_x^4.
			\end{equation}

		\end{itemize}

		Remark also that the operators $\sigma_x$, $\delta_x$ and $\delta_x^2$ all commute at interior points. That is, for any $\mathfrak{v} \in l_h^2$
		\begin{gather}
			\label{eq_sigma_dx_comutator}
			((\sigma_x \delta_x - \delta_x \sigma_x) \mathfrak{v})_j = 0,\\
			\label{eq_sigma_dx2_comutator}
			((\sigma_x \delta_x^2 - \delta_x^2 \sigma_x)\mathfrak{v})_j = 0,\\
			\label{eq_dx_dx2_comutator}
			((\delta_x \delta_x^2 - \delta_x^2 \delta_x) \mathfrak{v})_j = 0, \\
			\nonumber \forall j \in \{2, ..., N-2\}.
		\end{gather}

		\begin{rem} \label{remhdependence}
			Clearly the operators $\delta_x,\, \delta_x^2,\,\delta_x^3,\, \delta_x^4$ depend on $h,$ but for notational simplicity this dependence is not explicitly indicated.
		\end{rem}

		\subsection{Uniform boundedness of the discrete operators}

			The fact that the biharmonic discrete operator $\delta_x^4$ is positive (in particular symmetric) is proved in ~\cite[Lemmas 10.9,\,10.10]{book}. Therefore its inverse $\Big(\delta^4_x\Big)^{-1}$ is also positive.

			A fundamental tool (analogous to classical elliptic theory) is the coercivity property (with $C>0$ independent of $\,h$) ~\cite[Propositions 10.11,10.13]{book},
			\begin{equation} \label{eqcoercivity}
				(\delta^4_x\fz,\fz)_h\geq C\Big(|\fz|^2_h+|\delta_x^2\fz|^2_h+|\delta_x\fz_x|^2_h\Big),
			\end{equation}

			valid for any grid function $\fz\in l^2_{h,0}$ such that also $\fz_x\in l^2_{h,0}.$

			\fbox{\parbox[c]{300pt}{\textbf{ALL OPERATORS HERE AND BELOW ARE CONSIDERED AS ACTING ON GRID FUNCTIONS $\fz\in l^2_{h,0}$ SO THAT ALSO $\fz_x\in l^2_{h,0}.$}}}

			We first show that, in `` operator sense'', the second-order operator $\delta_x^2$ is comparable (independently of $h>0$) to $(\delta^4_x)^{\frac12}.$

			\begin{lem}\label{lembounddel2del4sqrt}
				The operators
				$\Big(\delta^4_x\Big)^{-\frac12}\delta_x^2$ and $\delta_x^2\Big(\delta^4_x\Big)^{-\frac12}$
				are bounded in $l^2_{h,0},$ with bounds that are independent of $h.$
			\end{lem}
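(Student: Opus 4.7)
The plan is to use the coercivity estimate \eqref{eqcoercivity} together with the spectral calculus for the symmetric positive operator $\delta_x^4$ restricted to the admissible subspace $V=\{\fz\in l^2_{h,0} : \fz_x\in l^2_{h,0}\}$. On $V$ the operator $\delta_x^4$ is symmetric and positive, hence admits a positive symmetric square root $(\delta_x^4)^{1/2}$ and inverses $(\delta_x^4)^{-1}$, $(\delta_x^4)^{-1/2}$, all defined intrinsically on $V$.

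First, I would extract from \eqref{eqcoercivity} the single term
\begin{equation*}
|\delta_x^2\fz|_h^2 \;\leq\; C^{-1}\,(\delta_x^4\fz,\fz)_h \;=\; C^{-1}\,\bigl|(\delta_x^4)^{1/2}\fz\bigr|_h^2,\qquad \fz\in V,
\end{equation*}
where the second equality uses the self-adjointness of $(\delta_x^4)^{1/2}$. Setting $\fz=(\delta_x^4)^{-1/2}\fu$ for an arbitrary $\fu\in V$ gives $(\delta_x^4)^{1/2}\fz=\fu$, and therefore
\begin{equation*}
\bigl|\delta_x^2(\delta_x^4)^{-1/2}\fu\bigr|_h \;\leq\; C^{-1/2}\,|\fu|_h,
\end{equation*}
which is the desired $h$-independent bound for $\delta_x^2(\delta_x^4)^{-1/2}$.

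Next, for the companion operator $(\delta_x^4)^{-1/2}\delta_x^2$, I would invoke a duality argument. The standard summation by parts (using $\fz,\fw\in l^2_{h,0}$) shows that $\delta_x^2$ is symmetric on $l^2_{h,0}$; together with the self-adjointness of $(\delta_x^4)^{-1/2}$ on $V$, this yields
\begin{equation*}
\bigl[(\delta_x^4)^{-1/2}\delta_x^2\bigr]^{*} \;=\; \delta_x^2\,(\delta_x^4)^{-1/2},
\end{equation*}
so the two operators have the same operator norm. The bound from the previous step therefore transfers to $(\delta_x^4)^{-1/2}\delta_x^2$, completing the proof.

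The main obstacle I anticipate is not analytic but organizational: one must verify that the functional calculus is legitimately carried out on $V$ — namely, that $\delta_x^4$ maps $V$ to itself and is positive and invertible there (so $(\delta_x^4)^{-1/2}$ is genuinely defined on $V$), and that $\delta_x^2$ preserves the domain considerations implicit in the framed convention following \eqref{eqcoercivity}. Once these domain issues are in place, the argument reduces to the two-line manipulation above.
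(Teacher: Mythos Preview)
Your proof is correct and follows essentially the same route as the paper: both substitute $\fz=(\delta_x^4)^{-1/2}\fw$ into the coercivity inequality \eqref{eqcoercivity} to bound $\delta_x^2(\delta_x^4)^{-1/2}$, and then pass to the adjoint to obtain the bound for $(\delta_x^4)^{-1/2}\delta_x^2$. Your version is slightly more explicit about the identity $(\delta_x^4\fz,\fz)_h=|(\delta_x^4)^{1/2}\fz|_h^2$ and about the domain considerations, but the argument is the same.
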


			\begin{proof}
				We use the coercivity property ~\eqref{eqcoercivity} with $\fz=\Big(\delta^4_x\Big)^{-\frac12}\fw,$ and obtain
				\begin{equation} \label{del4minus12}
					\Big(\Big(\delta^4_x\Big)^{\frac12}\fw,\Big(\delta^4_x\Big)^{-\frac12}\fw\Big)_h\geq C\Big|\delta_x^2\Big(\delta^4_x\Big)^{-\frac12}\fw\Big|^2_h.
				\end{equation}

				The operator $\delta_x^2\Big(\delta^4_x\Big)^{-\frac12}$ is therefore bounded, with a bound that is independent of $h.$ The same is true (with the same bound, by a well-known fact about norms of adjoints) for its adjoint, namely,
				$\Big(\delta^4_x\Big)^{-\frac12}\delta_x^2.$
			\end{proof}

			In the sequel we shall find it useful to use slightly different (and in fact weaker) boundedness facts (again uniform with respect to $h$), that are listed in the following proposition .

			\begin{prop}\label{propbounded}
				The operators $(\delta^4_x)^{-1},\,\,\Big(\delta^4_x\Big)^{-1}\widetilde{\delta_x^2}$ and $\widetilde{\delta}_x^2\Big(\delta^4_x\Big)^{-1}$
				are bounded in $l^2_{h,0},$ with bounds that are independent of $h.$
			\end{prop}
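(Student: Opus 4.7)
The plan is to reduce all three claimed bounds to the coercivity estimate \eqref{eqcoercivity} together with the preceding Lemma \ref{lembounddel2del4sqrt} and the algebraic identity \eqref{twosecondorder}.

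First, I would handle $(\delta^4_x)^{-1}$. Because $\delta^4_x$ is symmetric and positive, the inverse is well-defined on $l^2_{h,0}$ (restricted, as always, to functions whose Hermitian derivative also vanishes at the boundary). Applying \eqref{eqcoercivity} to $\fz = (\delta^4_x)^{-1}\fw$ gives
\begin{equation*}
    \bigl((\delta^4_x)(\delta^4_x)^{-1}\fw,\,(\delta^4_x)^{-1}\fw\bigr)_h
    = \bigl(\fw,\,(\delta^4_x)^{-1}\fw\bigr)_h \;\geq\; C\,\bigl|(\delta^4_x)^{-1}\fw\bigr|^2_h,
\end{equation*}
and by Cauchy--Schwarz on the left-hand side one concludes $|(\delta^4_x)^{-1}\fw|_h \leq C^{-1}|\fw|_h$, uniformly in $h$. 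As a by-product the positive square root $(\delta^4_x)^{-1/2}$ is then also a uniformly bounded operator on $l^2_{h,0}$.

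Next, I would use the relation \eqref{twosecondorder}, which I rewrite as
\begin{equation*}
    \widetilde{\delta_x^2} \;=\; \delta_x^2 \;-\; \frac{h^2}{12}\,\delta_x^4.
\end{equation*}
Composing with $(\delta^4_x)^{-1}$ on either side gives
\begin{equation*}
    (\delta^4_x)^{-1}\widetilde{\delta_x^2} = (\delta^4_x)^{-1}\delta_x^2 - \frac{h^2}{12}\,I,
    \qquad
    \widetilde{\delta_x^2}(\delta^4_x)^{-1} = \delta_x^2(\delta^4_x)^{-1} - \frac{h^2}{12}\,I.
\end{equation*}
The term $\frac{h^2}{12}I$ is bounded by $\frac{1}{12}$ uniformly in $h \in (0,1]$, so everything reduces to bounding the two products $(\delta^4_x)^{-1}\delta_x^2$ and $\delta_x^2(\delta^4_x)^{-1}$. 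I would factor each through $(\delta^4_x)^{-1/2}$:
\begin{equation*}
    (\delta^4_x)^{-1}\delta_x^2 = (\delta^4_x)^{-1/2}\bigl[(\delta^4_x)^{-1/2}\delta_x^2\bigr],
    \qquad
    \delta_x^2(\delta^4_x)^{-1} = \bigl[\delta_x^2(\delta^4_x)^{-1/2}\bigr](\delta^4_x)^{-1/2}.
\end{equation*}
In each case the inner bracket is uniformly bounded by Lemma \ref{lembounddel2del4sqrt}, while the outer factor $(\delta^4_x)^{-1/2}$ is uniformly bounded by the first step. Adding the harmless $\frac{h^2}{12}I$ then yields the desired uniform bounds.

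No serious obstacle is anticipated: the construction is purely an algebraic chaining of already-established uniform bounds with the defining identity \eqref{twosecondorder}. The only subtle point is to observe at the outset that $(\delta^4_x)^{-1/2}$ inherits its uniform bound from $(\delta^4_x)^{-1}$ (via the standard identity $|\mathcal{A}^{1/2}|_h = |\mathcal{A}|_h^{1/2}$ for positive self-adjoint $\mathcal{A}$), which is what makes the factorisation through $(\delta^4_x)^{-1/2}$ legitimate.
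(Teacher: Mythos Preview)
Your proposal is correct and follows essentially the same route as the paper: coercivity plus Cauchy--Schwarz for $(\delta^4_x)^{-1}$, then the identity \eqref{twosecondorder} to reduce to $(\delta^4_x)^{-1}\delta_x^2$, which is factored through $(\delta^4_x)^{-1/2}$ and bounded via Lemma~\ref{lembounddel2del4sqrt}. You are slightly more explicit than the paper in spelling out the uniform bound on $(\delta^4_x)^{-1/2}$ and in treating both orderings symmetrically, but the argument is the same.
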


			\begin{proof}
				The boundedness of $\Big(\delta^4_x\Big)^{-1}$ follows directly from the coercivity property ~\eqref{eqcoercivity}, by an obvious application of the Cauchy-Schwarz inequality.

				In view of ~\eqref{twosecondorder},
				\begin{equation} \label{simplify} \aligned
					\Big(\delta^4_x\Big)^{-1}\widetilde{\delta}_x^2
					=
					\Big(\delta^4_x\Big)^{-1}\delta_x^2 - \frac{h^2}{12}\Big(\delta^4_x\Big)^{-1}\delta_x^4\\=
					\Big(\delta^4_x\Big)^{-1}\delta_x^2 - \frac{h^2}{12}I.\endaligned
				\end{equation}

				It therefore suffices to prove the boundedness of $\Big(\delta^4_x\Big)^{-1}\delta_x^2.$ But this simply follows from Lemma ~\ref{lembounddel2del4sqrt} and
				\begin{equation*}
					\Big(\delta^4_x\Big)^{-1}\delta_x^2=\Big(\delta^4_x\Big)^{-\frac12}\Big(\delta^4_x\Big)^{-\frac12}\delta_x^2.
				\end{equation*}
			\end{proof}

	\section{\textbf{BOUNDARY and NEAR-BOUNDARY APPROXIMATIONS OF HIGHER DERIVATIVES}}\label{secboundary}

		In this section we develop discrete approximations for higher-order derivatives at boundary points as well as near boundary points. Such expressions are needed for the discrete approximation of the general Equation \eqref{utxxxxcontgen}. For the sake of simplicity we assume as above homogeneous boundary conditions (of the unknown function and its first derivative). Certainly general boundary conditions can be reduced to the homogeneous ones (with appropriate change of the right-hand side).

		The basic idea is to express the boundary values of the higher-order discrete derivatives $\mathfrak{u}^{(2)}_i, \mathfrak{u}^{(3)}_i, \mathfrak{u}^{(4)}_i,\,\,i=0,N$ in terms of $\mathfrak{u}$ and its Hermitian derivative.

		We begin by expanding the second-order discrete derivative operator from Equation \eqref{eqdefdelta2tild} also to the boundary points $j=0$ and $j=N$. With the values of $\mathfrak{u}_0^{(2)}, \mathfrak{u}_N^{(2)}$ to be determined below, we can write a new second-order discrete derivative operator $\widetilde{\delta_x^2}$, for any $\mathfrak{u}$ such that $\mathfrak{u}, \mathfrak{u}_x \in l^2_{h,0}$:
		\begin{gather} \label{eq_delta2tilda_op_new}
			(\widetilde{\delta_x^2} \mathfrak{u})_j = 2(\delta_x^2 \mathfrak{u})_j - (\delta_x \widetilde{\delta_x} \mathfrak{u})_j, \quad \forall j\in\{1,...,N-1\}, \\ \nonumber
			(\widetilde{\delta_x^2} \mathfrak{u})_0 = \mathfrak{u}_0^{(2)}, \quad (\widetilde{\delta_x^2} \mathfrak{u})_N = \mathfrak{u}_N^{(2)}.
		\end{gather}

		Next, with the boundary values $\mathfrak{u}_0^{(3)}, \mathfrak{u}_N^{(3)}$ to be determined below, the new operator $\widetilde{\delta_x^2}$ allows us to define a third-order discrete derivative operator $\delta_x^3$, for any $\mathfrak{u}$ such that $\mathfrak{u}, \mathfrak{u}_x \in l^2_{h,0}$. Unlike the second and fourth-order discrete derivative operators, that were derived from polynomial interpolation in \cite{IMA_SL_paper}, this expression is defined as a linear combination of two simple third-order discrete derivative approximations, as follows.
		\begin{gather} \label{eq_delta3_op}
			(\delta_x^3 \mathfrak{u})_j =
			(2\delta_x^2 \widetilde{\delta_x} \mathfrak{u})_j - (\delta_x \widetilde{\delta_x^2} \mathfrak{u})_j, \quad \forall j\in\{1,...,N-1\}, \\ \nonumber
			(\delta_x^3 \mathfrak{u})_0 = \mathfrak{u}_0^{(3)}, \quad (\delta_x^3 \mathfrak{u})_N = \mathfrak{u}_N^{(3)}.
		\end{gather}
		
		Similarly, we re-define the fourth-order discrete derivative operator $\delta_x^4$ in Equation \eqref{eq_delta4_op} by adding the boundary values $\mathfrak{u}_0^{(4)}, \mathfrak{u}_N^{(4)}$ to be determined below
		\begin{gather} \label{eq_delta4_op_new}
			(\delta_x^4 \mathfrak{u})_j =
			\frac{12}{h^2}((\delta_x\widetilde{\delta_x}\mathfrak{u})_j-(\delta_x^2\mathfrak{u})_j), \quad \forall j\in\{1,...,N-1\},\\ \nonumber
			(\delta_x^4 \mathfrak{u})_0 = \mathfrak{u}_0^{(4)}, \quad (\delta_x^4 \mathfrak{u})_N = \mathfrak{u}_N^{(4)}.
		\end{gather}

		Note that in the definition of $\delta_x^3 \mathfrak{u}$ at the near-boundary points $j=1,N-1$ the boundary values of $\mathfrak{u}_l^{(2)}=\widetilde{\delta_x^2}\fu$  for $l=0,N$  are needed. As remarked earlier, this need forces us (by the coupled equations below) also to associate discrete values to the third and fourth order derivatives on the boundary.

We now proceed to determine these values in terms of the discrete function $\mathfrak{u}$ and its Hermitian derivative at interior points. There will be three equations for the three unknowns at either one of the boundary points. For the point $x_0$ they are the following (analogous equations hold for $x_N$).
		
		\begin{equation} \label{eq_boundary_ode}
			f_0^\ast = \mathfrak{u}_0^{(4)} + D_0^\ast \mathfrak{u}_0^{(3)} + A_0^\ast \mathfrak{u}_0^{(2)},
		\end{equation}
		\begin{equation} \label{eq_boundary_simpson}
			\frac{\mathfrak{u}_2^{(3)}-\mathfrak{u}_0^{(3)}}{2h}=\frac{\mathfrak{u}_0^{(4)}+4\mathfrak{u}_1^{(4)}+\mathfrak{u}_2^{(4)}}{6},
		\end{equation}
		\begin{equation} \label{eq_boundary_lincomb}
			\mathfrak{u}_1^{(4)} = 2\frac{\mathfrak{u}_0^{(2)}-2\mathfrak{u}_1^{(2)}+\mathfrak{u}_2^{(2)}}{h^2} - \frac{\mathfrak{u}_2^{(3)}-\mathfrak{u}_0^{(3)}}{2h}.
		\end{equation}
		
		The unknowns in Equations \eqref{eq_boundary_ode} - \eqref{eq_boundary_lincomb} are $\mathfrak{u}_0^{(2)}, \mathfrak{u}_0^{(3)}$ and $\mathfrak{u}_0^{(4)}$. The remaining values $\mathfrak{u}^{(2)}_{1,2}, \mathfrak{u}_2^{(3)} $ and $\mathfrak{u}^{(4)}_{1,2}$ are all evaluated in terms of $\mathfrak{u}$ using equations \eqref{eqdefdelta2tild}, \eqref{eqdef3rd} and \eqref{eq_delta4_op}. Remark that the boundary value $\mathfrak{u}_0^{(2)}$ is incorporated in the evaluation of the near-boundary value $\mathfrak{u}_1^{(3)},$ as in ~\eqref{eqdef3rd}.

		The three equations above are derived from very basic facts. The first equation is simply Equation \eqref{utxxxxcontgen} for $x=0$, where the boundary conditions imply $\mathfrak{u}_0=\mathfrak{u}_0^{(1)}=0.$

		The second equation is a result of the Simpson integration scheme. That is to say, for some smooth $v:\Omega\rightarrow\mathbb{R}$ we know that
		\begin{equation*} \label{eq_simpson_integration}
			v(x_2) - v(x_0) = \int_{x_0}^{x_2} v'(x) dx = \frac{2h}{6}(v'(x_0) + 4v'(x_1) + v'(x_2)) + O(h^5).
		\end{equation*}

		Using $v = u^{(3)}$ yields
		\begin{equation} \label{eq_boundary_simpson_continuous}
			u^{(3)}(x_2) - u^{(3)}(x_0) = \frac{2h}{6}(u^{(4)}(x_0) + 4u^{(4)}(x_1) + u^{(4)}(x_2)) + O(h^5).
		\end{equation}

		A discrete approximation of this is Equation \eqref{eq_boundary_simpson}.

		The third equation is derived by a linear combination of two approximations of the second-order derivative as follows.\\

		For some smooth $v:\Omega\rightarrow\mathbb{R}$ we know that
		\begin{align*}
			v''(x_1) &= \frac{v(x_0)-2v(x_1)+v(x_2)}{h^2} -\frac{h^2}{12}v^{(4)}(x_1)+O(h^4),\\
			v''(x_1) &= \frac{v'(x_2)-v'(x_0)}{2h} -\frac{h^2}{6}v^{(4)}(x_1)+O(h^4).
		\end{align*}

		A linear combination of these equations gives
		\begin{equation*}
			v''(x_1) = 2\frac{v(x_0)-2v(x_1)+v(x_2)}{h^2} - \frac{v'(x_2)-v'(x_0)}{2h} + O(h^4).
		\end{equation*}

		Substituting $v = u^{(2)}$ we get
		\begin{equation} \label{eq_boundary_lincomb_continuous}
			u^{(4)}(x_1) = 2\frac{u^{(2)}(x_0)-2u^{(2)}(x_1)+u^{(2)}(x_2)}{h^2} - \frac{u^{(3)}(x_2)-u^{(3)}(x_0)}{2h} + O(h^4)
		\end{equation}

		Equation \eqref{eq_boundary_lincomb} is a discrete approximation of this equation.

		To summarize, the supplementary system of equations for the boundary values is as follows. \\

		\fbox{\parbox{\textwidth}{

			Equation \eqref{eq_boundary_ode} is the ODE itself.

			Equation \eqref{eq_boundary_simpson} represents the relation between 3rd and 4th-order derivatives by the Simpson operator, derived from the fundamental theorem of calculus.

			Equation \eqref{eq_boundary_lincomb} represents the connection between the 2nd, 3rd and 4th-order derivatives using Taylor's theorem.

		}}\bigskip
		
		Note that the equations only have a solution if the following conditions hold, which must be the case for sufficiently small values of $h$:
		\begin{align}
			\label{eq_conditions1}
			12 - 4 D_0 h + A_0 h^2 &\ne 0 \quad \text{(for $x=0$)} \\
			\label{eq_conditions2}
			12 + 4 D_N h + A_N h^2 &\ne 0 \quad \text{(for $x=1$)}
		\end{align}

		\subsection{Truncation error estimates}
		\label{sub_section_error_analysis}
		
			A full study of the truncation errors of the various differential operators appears in Appendix \ref{appendix_truncation}. This study shows different orders of the truncation error for interior, near-boundary and boundary points as summarized in Equation ~\eqref{eqtrunc1st}, as well as the following Equations ~\eqref{eqtrunc2nd}, ~\eqref{eqtrunc3rd} and ~\eqref{eqtrunc4th}.

			\begin{itemize}

				\item The truncation error for $\widetilde{\delta_x^2}$ (see \eqref{eq_deltax2_tilde_trunc}, \eqref{eq_deltax2_tilde_trunc_near_boundary} and \eqref{boundary_values_truncation_error}) is
				\begin{equation} \label{eqtrunc2nd}
				(\widetilde{\delta_x^2} u^\ast)_j = u^{(2)}(x_j) + \left\{\begin{matrix}
				O(h^4) &\forall j\in\{2,...,N-2\}, \\
				O(h^3) &\forall j\in\{0, 1, N-1, N\}.
				\end{matrix}\right.
				\end{equation}

				\item The truncation error for $\delta_x^3$ (see \eqref{eq_deltax3_trunc}, \eqref{boundary_values_truncation_error} and \eqref{eq_deltax3_trunc_full}) is
				\begin{equation} \label{eqtrunc3rd}
				(\delta_x^3 u^\ast)_j = u^{(3)}(x_j) + \left\{\begin{matrix}
				O(h^4) &\forall j\in\{2,...,N-2\}, \\
				O(h^2) &\forall j\in\{0, 1, N-1, N\}.
				\end{matrix}\right.
				\end{equation}

				\item The truncation error for $\delta_x^4$ (see \eqref{eq_deltax4_trunc}, \eqref{eq_deltax4_trunc_near_boundary} and \eqref{boundary_values_truncation_error}) is
				\begin{equation} \label{eqtrunc4th}
					(\delta_x^4 u^\ast)_j = u^{(4)}(x_j) + \left\{\begin{matrix}
						O(h^4) &\forall j\in\{2,...,N-2\}, \\
						O(h) &j = 1, N-1,\\
						O(h^2) &j = 0, N.
					\end{matrix}\right.
				\end{equation}
			\end{itemize}

	\section{\textbf{COMPACTNESS-THE DISCRETE VERSION OF RELLICH'S THEOREM}}\label{seccompact}
	
		The compactness of the inverse of an elliptic operator is equivalent (by domain considerations) to the compact embedding of the Sobolev space $H^s,\,s>0$ in $L^2.$ This is the celebrated Rellich theorem ~\cite[Chapter 5.7]{evans}, which is the cornerstone of the elliptic theory. Its proof requires several tools (for example, in a popular version of the proof, the use of Fourier transform and the Arzela-Ascoli theorem).
		
		In the discrete framework we do not have some of the aforementioned analytical tools. Yet we can ask ourselves the following question.
		
		\textbf{QUESTION: Is there a suitable $``\mbox{compactness}''$ property of the inverse $\Big(\delta^4_x\Big)^{-1}$?}
		
		Of course, if we just consider a \textit{fixed} $h>0$ such a question is meaningless since the underlying space is finite dimensional. However, we can provide a meaningful answer if \textit{all values} of $h>0$ are considered. In some sense, the compactness property is related to an ``increasing sequence of finite-dimensional spaces''.
		
		This question was addressed in ~\cite{IMA_SL_paper} and the answer is given below In Theorem ~\ref{thmcompactinverse} and Corollary ~\ref{cordelta2delta4}.
		
		We first introduce some notation, basically relating grid functions to functions defined on the interval $\Omega=[0,1]$ (see ~\cite[Section 10.2]{book}):
		
		For a grid function $\fz\in l^2_{h,0}$ we define its associated piecewise linear continuous function by
		
		\begin{defn}\label{defP1c}
			$$z_h(x)=\begin{cases}\mbox{linear in the interval}\,\, K_{i+\frac12}=(x_i,x_{i+1}),\,\,0\leq i\leq N-1, \\
			\fz_i,\,x=x_i,\,\,0\leq i\leq N.
			\end{cases}$$
		\end{defn}
		We now cite Theorem 3.7 in ~\cite{IMA_SL_paper}.
		\begin{thm} \label{thmcompactinverse}[\textbf{The discrete Rellich theorem}]
			Let $\set{0<N_1<N_2<...N_k<...}$ be an increasing sequence of integers and denote $h_k=\frac{1}{N_k},\,k=1,2,...$ Let $\set{\fv^{(k)}\in l^2_{h_k,0},\,k=1,2,...}$ be a bounded sequence of vectors so that
			\begin{equation}
			\label{eqth3.6bddv}\sup\set{|\fv^{(k)}|_{h_k},\,k=1,2,...}<\infty,
			\end{equation}
			and let
			\begin{equation*}
			\set{\fg^{(k)}=\Big(\delta^4_x\Big)^{-1}(\fv^{(k)}),\,\,k=1,2,...}.
			\end{equation*}
			
			Let $\set{g_{h_k},\,v_{h_k}}_{k=1}^\infty$ be the piecewise linear continuous functions in $\Omega=[0,1]$ corresponding to $\set{\fg^{(k)},\,\fv^{(k)}}_{k=1}^\infty,$ respectively (Definition ~\ref{defP1c}).
			
			In addition, let $\set{\fg^{(k)}_x}_{k=1}^\infty$ be the sequence of Hermitian derivatives of $\set{\fg^{(k)}}_{k=1}^\infty$ and let $\set{p_{h_k}}_{k=1}^\infty$ be the piecewise linear continuous functions in $\Omega=[0,1]$ corresponding to $\set{\fg^{(k)}_x}_{k=1}^\infty.$
			
			Then there exist subsequences $$\set{g_j:=g_{h_{k_j}},\,p_j:=p_{h_{k_j}},\,v_j:=v_{h_{k_j}}}_{j=1}^\infty$$ and limit functions $g(x),\,p(x),\,v(x),$ such that
			\begin{equation} \label{equnifconvgj}
				\lim\limits_{j\to\infty}g_j(x)=g(x)\,\,\mbox{in}\,\,\,C(\Omega),
			\end{equation}
			\begin{equation} \label{equnifconvpj}
				\lim\limits_{j\to\infty}p_j(x)=p(x)\,\,\mbox{in}\,\,\,C(\Omega),
			\end{equation}
			\begin{equation}
				\lim\limits_{j\to\infty}v_j(x)=v(x)\,\,\mbox{weakly in}\,\,\,L^2(\Omega).
			\end{equation}
			
			The limit function $g(x)$ is in $H^4(\Omega)\cap H^2_0(\Omega)$ and its derivatives satisfy
			\begin{equation} \label{eqthmg4isv}
				g'(x)=p(x),\quad\Big(\parx\Big)^4 g(x)=v(x).
			\end{equation}
		\end{thm}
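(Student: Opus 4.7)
The plan is to first obtain uniform-in-$k$ bounds on the discrete derivatives of $\fg^{(k)}$ via the coercivity estimate, upgrade these to compactness of the interpolants $g_{h_k}$ and $p_{h_k}$ in $C(\Omega)$ by a discrete Arzel\`a--Ascoli argument, and finally identify the limits by testing the defining equation $\delta_x^4 \fg^{(k)} = \fv^{(k)}$ against smooth compactly supported functions, transferring derivatives onto the test function via the symmetry of the discrete biharmonic.

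First, applying \eqref{eqcoercivity} with $\fz = \fg^{(k)}$ together with Cauchy--Schwarz yields
\begin{equation*}
    C\bigl(|\fg^{(k)}|_{h_k}^2 + |\delta_x^2 \fg^{(k)}|_{h_k}^2 + |\delta_x \fg^{(k)}_x|_{h_k}^2\bigr) \leq (\fv^{(k)}, \fg^{(k)})_{h_k} \leq |\fv^{(k)}|_{h_k}\,|\fg^{(k)}|_{h_k},
\end{equation*}
so that $|\fg^{(k)}|_{h_k}$, $|\delta_x^2 \fg^{(k)}|_{h_k}$ and $|\delta_x \fg^{(k)}_x|_{h_k}$ are all bounded uniformly in $k$. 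Since $\fg^{(k)}_x \in l^2_{h_k,0}$, a discrete Poincar\'e inequality gives a uniform bound on $|\fg^{(k)}_x|_{h_k}$, and the defining relation $\sigma_x \fg^{(k)}_x = \delta_x \fg^{(k)}$ in conjunction with \eqref{eqsigmaapproxh} then controls $|\delta_x \fg^{(k)}|_{h_k}$ uniformly.

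These discrete $H^1$-type bounds on both $\fg^{(k)}$ and its Hermitian derivative translate into uniform $L^\infty$ bounds and equicontinuity of the piecewise linear interpolants $g_{h_k}$ and $p_{h_k}$, via the one-dimensional discrete Sobolev embedding. Arzel\`a--Ascoli then supplies subsequences with $g_j \to g$ and $p_j \to p$ in $C(\Omega)$, the limits satisfying $g(0) = g(1) = p(0) = p(1) = 0$. Since $\sup_k |\fv^{(k)}|_{h_k} < \infty$ implies $\{v_{h_k}\}$ is bounded in $L^2(\Omega)$, we may extract a further subsequence so that $v_j \to v$ weakly in $L^2(\Omega)$.

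The heart of the proof, and the main obstacle, is identifying the limits in the sense of distributions. For $g' = p$: I would test $\sigma_x \fg^{(k)}_x = \delta_x \fg^{(k)}$ against $\varphi^\ast$ for $\varphi \in C_0^\infty(\Omega)$, use discrete summation by parts on the right (no boundary contributions since $\varphi^\ast$ vanishes near the endpoints for small $h_k$), control the $(\sigma_x - I)$ correction via \eqref{eqsigmaapproxh}, and pass to the limit using the uniform convergence of $g_j$ and $p_j$ to conclude $\int_\Omega p \varphi \, dx = -\int_\Omega g \varphi' \, dx$. For $g^{(4)} = v$: the symmetry of $\delta_x^4$ on $l^2_{h,0}$ cited before Lemma \ref{lembounddel2del4sqrt}, combined with the fact that both $\varphi^\ast$ and $(\varphi^\ast)_x$ vanish identically near the endpoints, gives
\begin{equation*}
    (\fv^{(k)}, \varphi^\ast)_{h_k} = (\delta_x^4 \fg^{(k)}, \varphi^\ast)_{h_k} = (\fg^{(k)}, \delta_x^4 \varphi^\ast)_{h_k}.
\end{equation*}
The truncation estimate \eqref{eqtrunc4th} guarantees $\delta_x^4 \varphi^\ast \to \varphi^{(4)}$ uniformly on the support of $\varphi$, so the right-hand side converges to $\int_\Omega g \varphi^{(4)} \, dx$ by $g_j \to g$ in $C(\Omega)$, while the left-hand side converges to $\int_\Omega v \varphi \, dx$ by weak $L^2$ convergence of $v_j$ paired with the strong $L^2$ approximation of $\varphi$ by $\varphi^\ast$. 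Hence $g^{(4)} = v$ distributionally; since $v \in L^2(\Omega)$, this places $g$ in $H^4(\Omega)$, and combined with $g(0) = g(1) = g'(0) = g'(1) = 0$ we conclude $g \in H^4(\Omega) \cap H^2_0(\Omega)$. The delicate point throughout is the consistent reconciliation of discrete inner products with their continuous $L^2$ counterparts during summation by parts, which is made tractable here precisely by the compact support of the test function.
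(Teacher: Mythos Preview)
The paper does not actually prove this theorem: it is stated as a citation of Theorem~3.7 in \cite{IMA_SL_paper}, with no proof given here. So there is no ``paper's own proof'' to compare against; your proposal is a reconstruction of the argument from first principles, and its overall architecture (coercivity $\Rightarrow$ uniform discrete bounds $\Rightarrow$ Arzel\`a--Ascoli for the interpolants $\Rightarrow$ identification of limits by duality against smooth test functions) is the right one and matches the strategy in the cited reference.

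That said, there is a genuine technical gap in your chain of a~priori bounds. You write that a discrete Poincar\'e inequality applied to $\fg^{(k)}_x\in l^2_{h_k,0}$, using the coercivity bound on $|\delta_x\fg^{(k)}_x|_{h_k}$, yields control of $|\fg^{(k)}_x|_{h_k}$. This fails: as noted in Remark~\ref{remzerodiscderiv}, the centred operator $\delta_x$ has a nontrivial kernel on $l^2_{h,0}$ (the oscillatory mode $(-1)^j$), so no inequality of the form $|\fz|_h\le C|\delta_x\fz|_h$ holds uniformly in $h$. The correct route is the reverse of what you sketch: from the coercivity bounds on $|\fg^{(k)}|_{h_k}$ and $|\delta_x^2\fg^{(k)}|_{h_k}$ one first controls the one-sided differences $|\delta^{\pm}\fg^{(k)}|_{h_k}$ (via the identity $(-\delta_x^2\fg,\fg)_h=|\delta^+\fg|_h^2$ on $l^2_{h,0}$), hence $|\delta_x\fg^{(k)}|_{h_k}$, and then the uniform boundedness of $\sigma_x^{-1}$ on $l^2_{h,0}$ together with $\sigma_x\fg^{(k)}_x=\delta_x\fg^{(k)}$ gives $|\fg^{(k)}_x|_{h_k}$ bounded. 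The same caution applies to the equicontinuity of $p_{h_k}$: a bound on $|\delta_x\fg^{(k)}_x|_{h_k}$ alone does not control the increments $|(\fg^{(k)}_x)_{m+1}-(\fg^{(k)}_x)_m|$ needed for Arzel\`a--Ascoli; you need the one-sided differences $|\delta^{\pm}\fg^{(k)}_x|_{h_k}$, which in \cite{IMA_SL_paper} (and in \cite[Chapter~10]{book}) come out of a sharper form of the coercivity estimate than the version \eqref{eqcoercivity} quoted here. Also, your invocation of \eqref{eqsigmaapproxh} is misplaced: that estimate concerns $(\sigma_x-I)\psi^\ast$ for smooth $\psi$, not the action of $\sigma_x$ on arbitrary grid functions. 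Once these bounds are set up correctly, your limit-identification step via symmetry of $\delta_x^4$ and the truncation estimate \eqref{eqtrunc4th} goes through as you describe.
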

		
		\bigskip
		
		In Theorem ~\ref{thmcompactinverse} we have seen that in addition to the convergence ~\eqref{equnifconvgj}, the piecewise-linear functions corresponding to the Hermitian derivatives $\set{\fg^{(k_j)}_x}_{j=1}^\infty$ converge uniformly to $g'(x)$ ~\eqref{equnifconvpj}. Next we repeat Corollary 3.8 in ~\cite{IMA_SL_paper} that yields a weaker convergence statement for the second-order derivatives. We shall need details of its proof when dealing with convergence properties of the \textit{third-order derivative} in Corollary ~\ref{cordelta3conv} below. Recall that the third-order derivative has not been addressed in the Sturm-Liouville (self-adjoint) case.
		
		\begin{cor} \label{cordelta2delta4}
			In the setting of Theorem ~\ref{thmcompactinverse} let
			$$ \fw^{(k)}=\widetilde{\delta_x^2}\fg^{(k)}
			=\widetilde{\delta}_x^2\Big(\delta^4_x\Big)^{-1}(\fv^{(k)}).$$
			
			Let $w_{h_k}$ be the piecewise linear continuous functions in $\Omega=[0,1]$ corresponding to $\fw^{(k)}$ (Definition ~\ref{defP1c}).
			
			Let the sequences $\set{g_j:=g_{h_{k_j}},\,v_j:=v_{h_{k_j}}}_{j=1}^\infty$ and limit functions $g(x),\,\,v(x),$ be as in theorem ~\ref{thmcompactinverse} and let $\set{w_j:=w_{h_{k_j}}}_{j=1}^\infty.$
			
			Then
			\begin{equation} \label{eqlimwj}
				\lim\limits_{j\to\infty}w_j(x)=g''(x)\,\,\mbox{weakly in}\,\,\,L^2(\Omega).
			\end{equation}
		\end{cor}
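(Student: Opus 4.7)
My plan is to identify the weak $L^2$-limit of $\{w_j\}$ with $g''$ by duality against $C_0^\infty(\Omega)$ test functions; uniqueness of weak limits then promotes subsequential convergence to convergence of the whole sequence. First, by Proposition \ref{propbounded} the operator $\widetilde{\delta_x^2}\bigl(\delta_x^4\bigr)^{-1}$ is bounded on $l^2_{h,0}$ uniformly in $h$, so \eqref{eqth3.6bddv} yields $\sup_k|\fw^{(k)}|_{h_k}<\infty$. A standard comparison of the grid norm with the $L^2$ norm of the piecewise linear interpolant then bounds $\{w_{h_k}\}$ in $L^2(\Omega)$, so I may extract a further subsequence of $\{w_j\}$ converging weakly in $L^2(\Omega)$ to some $w$, and it suffices to show $w=g''$.

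Next, I shift $\widetilde{\delta_x^2}$ off of $\fg^{(k)}$ onto the test function. Fix $\psi\in C_0^\infty(\Omega)$ and take $h_k$ small enough that $\psi^\ast$ vanishes at the boundary and near-boundary indices $\{0,1,N-1,N\}$. Using \eqref{twosecondorder} together with $\delta_x^4\fg^{(k)}=\fv^{(k)}$,
\begin{equation*}
(\fw^{(k)},\psi^\ast)_{h_k} = (\delta_x^2\fg^{(k)},\psi^\ast)_{h_k} - \tfrac{h_k^2}{12}(\fv^{(k)},\psi^\ast)_{h_k},
\end{equation*}
and the last term is $O(h_k^2)$ by Cauchy--Schwarz. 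Since $\fg^{(k)}$ and $\psi^\ast$ both lie in $l^2_{h_k,0}$, the symmetry of the central second difference gives $(\delta_x^2\fg^{(k)},\psi^\ast)_{h_k}=(\fg^{(k)},\delta_x^2\psi^\ast)_{h_k}$ with no boundary correction. Because $\psi^\ast$ vanishes on $\{0,1,N-1,N\}$ for small $h_k$, the interior truncation $\delta_x^2\psi^\ast=(\psi'')^\ast+O(h_k^2)$ applies at every grid point that contributes, so the uniform bound on $|\fg^{(k)}|_{h_k}$ yields
\begin{equation*}
(\fw^{(k)},\psi^\ast)_{h_k} = (\fg^{(k)},(\psi'')^\ast)_{h_k} + O(h_k^2).
\end{equation*}

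Finally, I pass to the limit along the subsequence $j\to\infty$. The uniform convergence $g_j\to g$ in $C(\Omega)$ from Theorem \ref{thmcompactinverse} makes $(\fg^{(k_j)},(\psi'')^\ast)_{h_{k_j}}$ a Riemann sum converging to $\int_0^1 g\,\psi''\,dx$; since $g\in H^4(\Omega)\cap H^2_0(\Omega)$ and $\psi$ is compactly supported, two integrations by parts give $\int_0^1 g\,\psi''\,dx=\int_0^1 g''\,\psi\,dx$. A trapezoidal-rule estimate also yields $(\fw^{(k_j)},\psi^\ast)_{h_{k_j}}=\int_0^1 w_j\,\psi\,dx+O(h_{k_j})$, so the weak $L^2$ convergence gives $(\fw^{(k_j)},\psi^\ast)_{h_{k_j}}\to\int_0^1 w\,\psi\,dx$. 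Combining, $\int w\psi\,dx=\int g''\psi\,dx$ for every $\psi\in C_0^\infty(\Omega)$, hence $w=g''$ in $L^2(\Omega)$, and since every weak subsequential limit equals $g''$ the whole sequence $\{w_j\}$ converges weakly to $g''$. The only delicate point in the argument is the low-order boundary truncation of $\widetilde{\delta_x^2}$ noted in \eqref{eqtrunc2nd}, but this is neatly circumvented by the test-function strategy: $\psi^\ast$ vanishes on the problematic grid points for all small $h_k$, so only the clean interior $O(h_k^2)$ truncation enters.
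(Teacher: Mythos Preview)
Your proof is correct and follows the same overall duality strategy as the paper (test against $\phi\in C_0^\infty(\Omega)$, move the discrete second-order operator onto $\phi^\ast$, replace it by $(\phi'')^\ast$ via truncation, then pass to the limit using \eqref{equnifconvgj}), but the mechanism for shifting the operator is genuinely different. The paper invokes the symmetry of the composite operator $\widetilde{\delta_x^2}$ directly, writing $(\widetilde{\delta_x^2}\fg^{(k_j)},\phi^\ast)_{h_{k_j}}=(\fg^{(k_j)},\widetilde{\delta_x^2}\phi^\ast)_{h_{k_j}}$, and then appeals to the truncation estimate \eqref{eqtrunc2nd} for $\widetilde{\delta_x^2}\phi^\ast$. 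You instead use the algebraic identity \eqref{twosecondorder} to split $\widetilde{\delta_x^2}\fg^{(k)}=\delta_x^2\fg^{(k)}-\tfrac{h_k^2}{12}\fv^{(k)}$, discard the second term as $O(h_k^2)$ by the boundedness of $\fv^{(k)}$, and only then use the elementary symmetry of $\delta_x^2$ together with its standard $O(h_k^2)$ Taylor truncation. Your route is a bit more self-contained: it avoids relying on the symmetry of the more intricate operator $\widetilde{\delta_x^2}$ (which involves the Hermitian derivative through $\delta_x\fv_x$) and on its higher-order truncation properties. The paper's route, on the other hand, keeps the argument structurally parallel to the third-order case in Corollary~\ref{cordelta3conv}, where the same ``move the operator to the test function'' template is reused. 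Your closing remark about the near-boundary truncation of $\widetilde{\delta_x^2}$ being sidestepped by the compact support of $\psi$ is well observed, though in fact your decomposition makes this point moot: you never apply $\widetilde{\delta_x^2}$ to $\psi^\ast$ at all, only $\delta_x^2$, whose second-order truncation holds uniformly at every interior grid point for a smooth $\psi$.
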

		
		\begin{proof}
			In light of Proposition ~\ref{propbounded} we have
			\begin{equation}
				\label{equnifbddfw}\sup\limits_{k=1,2,\ldots}\set{|\fw^{(k)}|_{h_{k}}} <\infty.
			\end{equation}
			
			Let $\phi(x)\in C^\infty_0(0,1)$ be a test function . Then
			$$
			(\fw^{(k_j)},\phi^\ast)_{h_{k_j}}=(\widetilde{\delta}_x^2\fg^{(k_j)},\phi^\ast)_{h_{k_j}}=
			(\fg^{(k_j)},\widetilde{\delta}_x^2\phi^\ast)_{h_{k_j}}.
			$$
			The sum in the left-hand side, can be expressed in terms of the corresponding piecewise linear functions as (see ~\cite[Lemma 10.4]{book})
			
			\begin{equation} \label{eqwjphi2}
				\int_0^1w_j(x)\phi_j(x)dx=			(\fw^{(k_j)},\phi^\ast)_{h_{k_j}}			-\frac{h_{k_j}}{6}\suml_{m=0}^{N_{k_j}-1}			(\fw^{(k_j)}_{m+1}-\fw^{(k_j)}_{m})(\phi^\ast_{m+1}-\phi^\ast_{m})
			\end{equation}
			
			where $\phi^\ast\in l^2_{h_{k_j},0}$ is the grid function associated with the function $\phi(x)$ (with mesh size $h_{k_j}$).
			
			By the Cauchy-Schwarz inequality
			$$|\suml_{m=0}^{N_{k_j}-1}
			(\fw^{(k_j)}_{m+1}-\fw^{(k_j)}_{m})(\phi^\ast_{m+1}-\phi^\ast_{m})|\leq 2N_{k_j}|\fw^{(k_j)}|_{h_{k_j}}\max\limits_{0\leq m\leq N_{k_j}-1} |\phi^\ast_{m+1}-\phi^\ast_{m}|.$$
			
			Clearly
			$$\max\limits_{0\leq m\leq N_{k_j}-1} |\phi^\ast_{m+1}-\phi^\ast_{m}|\xrightarrow[j\to\infty]{}0,$$
			
			hence
			\begin{equation} \label{eqintwjphij}
				\lim\limits_{j\to\infty}\int_0^1w_j(x)\phi_j(x)dx=		\lim\limits_{j\to\infty}(\fw^{(k_j)},\phi^\ast)_{h_{k_j}} =\lim\limits_{j\to\infty}(\fg^{(k_j)},\widetilde{\delta_x^2}\phi^\ast)_{h_{k_j}}.
			\end{equation}
			
			The uniform boundedness of $\set{w_j}_{j=1}^\infty$ and the uniform convergence of $\set{\phi_j}$ to $\phi$ imply
			$$\lim\limits_{j\to\infty}\int_0^1w_j(x)\phi_j(x)dx=\lim\limits_{j\to\infty}\int_0^1w_j(x)\phi(x)dx,$$
			
			so that
			\begin{equation}
				\lim\limits_{j\to\infty}\int_0^1w_j(x)\phi(x)dx=\lim\limits_{j\to\infty}(\fg^{(k_j)},\widetilde{\delta_x^2}\phi^\ast)_{h_{k_j}}.
			\end{equation}
			
			We let $(\phi'')_j$ denote the piecewise linear continuous function corresponding to $\phi''$ (for mesh size $h_{k_j}$).
			
			By ~\eqref{eqtrunc2nd}
			$$\lim\limits_{j\to\infty}(\fg^{(k_j)},\widetilde{\delta_x^2}\phi^\ast)_{h_{k_j}}=
			\lim\limits_{j\to\infty}(\fg^{(k_j)},(\phi'')^\ast)_{h_{k_j}},$$
			
			and as in the first equality in ~\eqref{eqintwjphij}
			\begin{equation}
				\label{eqfgphi2}\lim\limits_{j\to\infty}(\fg^{(k_j)},(\phi'')^\ast)_{h_{k_j}} = \lim\limits_{j\to\infty}\int_0^1g_j(x)(\phi'')_j(x)dx = \int_0^1g(x)\phi''(x)dx.
			\end{equation}
			
			Thus
			$$\lim\limits_{j\to\infty}\int_0^1w_j(x)\phi(x)dx=\int_0^1g(x)\phi''(x)dx,$$
			
			which proves ~\eqref{eqlimwj}.
			
		\end{proof}
		
		We show next that a similar, but weaker, claim holds for the third-order discrete derivative, defined in ~\eqref{eqdef3rd}.
		
		In the setting of Theorem ~\ref{thmcompactinverse} let
		$$\fn^{(k)}=\delta_x^3\fg^{(k)}
		=\delta_x^3\Big(\delta^4_x\Big)^{-1}(\fv^{(k)}).$$
		
		Observe that unlike the uniform boundedness of the discrete second-order derivatives ~\eqref{equnifbddfw}, we do not have a similar claim for the sequence $\set{|\fn^{(k)}|_{h_k}}.$ That's essentially due to the fact that for the discrete third-order derivative $\delta_x^3$ we do not have a boundedness result analogous to Lemma ~\ref{lembounddel2del4sqrt}. Thus we cannot derive directly the analog of the limit statement in ~\eqref{eqintwjphij} and a more careful use of duality is needed.
		
		\begin{cor}\label{cordelta3conv}
			In the setting of Theorem ~\ref{thmcompactinverse} we have
			\begin{equation} \label{eqlimdeltax3}
				\lim\limits_{j\to\infty}(\fn^{(k_j)},\phi^\ast)_{h_{k_j}}=\int\limits_0^1g^{(3)}(x)\phi(x)dx,
			\end{equation}
			
			where $\phi(x)$ is a test function as in the proof of Theorem ~\ref{thmcompactinverse}.
		\end{cor}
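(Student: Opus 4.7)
The plan is to move every discrete derivative off $\fg^{(k_j)}$ and onto the test function $\phi$ by discrete summation by parts, and then to identify the limit using the convergence results of Theorem \ref{thmcompactinverse} and Corollary \ref{cordelta2delta4}. The key enabling fact is that $\phi\in C_0^\infty(0,1)$, so for $h_{k_j}$ sufficiently small the grid function $\phi^\ast$ vanishes identically on a fixed discrete neighborhood of each endpoint. This simultaneously annihilates the separately assigned boundary values $(\delta_x^3\fg^{(k_j)})_{0,N}$ and every boundary contribution that would otherwise arise from summation by parts (including any contribution involving the nonzero boundary values of $\widetilde{\delta_x^2}\fg^{(k_j)}$).

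First, using the interior formula in \eqref{eqdef3rd}, I expand
$$(\fn^{(k_j)}, \phi^\ast)_{h_{k_j}} = 2\bigl(\delta_x^2\widetilde{\delta_x}\fg^{(k_j)},\, \phi^\ast\bigr)_{h_{k_j}} - \bigl(\delta_x\widetilde{\delta_x^2}\fg^{(k_j)},\, \phi^\ast\bigr)_{h_{k_j}},$$
and then transfer $\delta_x^2$ and $\delta_x$ onto $\phi^\ast$ via summation by parts to obtain
$$(\fn^{(k_j)}, \phi^\ast)_{h_{k_j}} = 2\bigl(\widetilde{\delta_x}\fg^{(k_j)},\, \delta_x^2 \phi^\ast\bigr)_{h_{k_j}} + \bigl(\widetilde{\delta_x^2}\fg^{(k_j)},\, \delta_x \phi^\ast\bigr)_{h_{k_j}}.$$
Next, I replace $\delta_x^2\phi^\ast$ and $\delta_x\phi^\ast$ by $(\phi'')^\ast$ and $(\phi')^\ast$, respectively. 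The standard $O(h^2)$ truncation errors are absorbed by Cauchy--Schwarz because $|\widetilde{\delta_x}\fg^{(k_j)}|_{h_{k_j}}$ is uniformly bounded (Theorem \ref{thmcompactinverse} gives uniform convergence $p_j\to g'$, hence uniform boundedness of the underlying grid values) and $|\widetilde{\delta_x^2}\fg^{(k_j)}|_{h_{k_j}}$ is uniformly bounded by Proposition \ref{propbounded}.

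To pass to the limit, I apply the piecewise-linear representation argument from the proof of Corollary \ref{cordelta2delta4} (formula \eqref{eqwjphi2}, whose jump correction $\to 0$ by the same Cauchy--Schwarz bookkeeping): the uniform convergence $p_j\to g'$ yields $(\widetilde{\delta_x}\fg^{(k_j)}, (\phi'')^\ast)_{h_{k_j}}\to\int_0^1 g'(x)\phi''(x)\,dx$, and Corollary \ref{cordelta2delta4} applied with the test function $\phi'\in C_0^\infty(0,1)$ yields $(\widetilde{\delta_x^2}\fg^{(k_j)}, (\phi')^\ast)_{h_{k_j}}\to\int_0^1 g''(x)\phi'(x)\,dx$. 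Two integrations by parts on the smooth side (valid because $g\in H^4(\Omega)\cap H_0^2(\Omega)$ and $\phi$ is compactly supported) then collapse $2\int_0^1 g'\phi''\,dx + \int_0^1 g''\phi'\,dx$ to $-\int_0^1 g''\phi'\,dx = \int_0^1 g^{(3)}\phi\,dx$, which is the desired identity.

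The main obstacle is verifying that the summation by parts is truly boundary-free. Although $\widetilde{\delta_x^2}\fg^{(k_j)}$ carries nonzero boundary values defined by the coupled system of Section \ref{secboundary}, every potential boundary correction is multiplied by a grid value of $\phi^\ast$ (or of a one- or two-step shift of $\phi^\ast$) at a near-boundary index, all of which vanish for $j$ large. Once this bookkeeping is carried out explicitly, the remainder of the argument is a routine assembly of the truncation estimates of Section \ref{secboundary} and the convergence statements of Section \ref{seccompact}.
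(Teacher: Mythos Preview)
Your proposal is correct and follows essentially the same route as the paper: decompose $\delta_x^3$ via \eqref{eqdef3rd}, transfer $\delta_x^2$ and $\delta_x$ onto $\phi^\ast$ by summation by parts (the compact support of $\phi$ killing all boundary contributions), and then pass to the limit in each term using, respectively, the uniform convergence $p_j\to g'$ from Theorem~\ref{thmcompactinverse} and the weak convergence $w_j\to g''$ from Corollary~\ref{cordelta2delta4}, followed by integration by parts on the continuous side. Your treatment is in fact a bit more explicit than the paper's about why the nonzero boundary values $(\widetilde{\delta_x^2}\fg^{(k_j)})_{0,N}$ cause no trouble, which is a useful clarification.
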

		
		\begin{proof}
			Let $\phi(x)$ be a test function as in the proof of Theorem ~\ref{thmcompactinverse}. Then, with the notation used in that proof,
			\begin{equation} \label{eqdelta3phi}
				(\fn^{(k_j)},\phi^\ast)_{h_{k_j}}=(\delta_x^3\fg^{(k_j)},\phi^\ast)_{h_{k_j}}=(2\delta_x^2\fg^{(k_j)}_x			-\delta_x\widetilde{\delta}_x^2\fg^{(k_j)}			,\phi^\ast)_{h_{k_j}}.
			\end{equation}
			
			Let us now study separately the two terms in the right-hand side of ~\eqref{eqdelta3phi}.
			
			We have
			$$(2\delta_x^2\fg^{(k_j)}_x,\phi^\ast)_{h_{k_j}}=(2\fg^{(k_j)}_x,\delta_x^2\phi^\ast)_{h_{k_j}}.$$
			
			We let $(\phi'')_j$ denote the piecewise linear function corresponding to $\phi''$ (for mesh size $h_{k_j}$) and recall that $\set{p_j}$ are the piecewise linear functions corresponding to $\set{\fg_x^{(k_j)}}$ ~\eqref{equnifconvpj}.
			
			In analogy with ~\eqref{eqfgphi2} we get
			\begin{equation} \label{eqfgxphidelta2}
				\lim\limits_{j\to\infty}(\fg_x^{(k_j)},\delta_x^2\phi^\ast)_{h_{k_j}} = 	\lim\limits_{j\to\infty}\int_0^1p_j(x)\phi''_j(x)dx=\int_0^1g'(x)\phi''(x)dx,
			\end{equation}
			
			hence
			\begin{equation} \label{eq2delx2gx}
				\lim\limits_{j\to\infty}(2\delta_x^2\fg^{(k_j)}_x,\phi^\ast)_{h_{k_j}} = \int_0^1g^{(3)}(x)\phi(x)dx.
			\end{equation}
			
			Next we have
			$$(\delta_x\widetilde{\delta}_x^2\fg^{(k_j)}
			,\phi^\ast)_{h_{k_j}}=-(\widetilde{\delta}_x^2\fg^{(k_j)},\delta_x\phi^\ast)_{h_{k_j}}.$$
			
			Invoking Corollary ~\ref{cordelta2delta4} (note that $\widetilde{\delta}_x^2\fg^{(k_j)}=\fw^{(k_j)}$) yields
			\begin{equation} \label{eqdelta2xg}
				\lim\limits_{j\to\infty}(\widetilde{\delta}_x^2\fg^{(k_j)},\delta_x\phi^\ast)_{h_{k_j}} = 	\int_0^1g''(x)\phi'(x)dx=-\int_0^1g^{(3)}(x)\phi(x)dx.
			\end{equation}
			
			Plugging ~\eqref{eq2delx2gx} and ~\eqref{eqdelta2xg} into ~\eqref{eqdelta3phi} we obtain ~\eqref{eqlimdeltax3}.
		\end{proof}

	\section{\textbf{A DISCRETE VERSION OF THE FOURTH-ORDER EQUATION}} \label{secdiscreteconvergence}

		Recall the operator $L_{cont}$ in \eqref{utxxxxcontgen}. Unlike the differential operator handled in \cite{IMA_SL_paper}, this one is not self-adjoint, which means that its eigenvalues are generally not real (in which case they appear as complex-conjugate pairs ~\cite{Naimark_book}).

		Using the finite difference operators introduced in Sections ~\ref{secdiscsetup} and ~\ref{secboundary}, and taking $h=\frac{1}{N},$ we introduce the discrete analog of Equation ~\eqref{utxxxxcontgen} by
		\begin{equation} \label{utxxxxdiscgen} \aligned
			(L_{disc,h}\fg^h)_i=(\delta^4_x\fg^h)_i+D^{\ast,h}_i(\delta_x^3\fg^h)_i + A^{\ast,h}_i(\widetilde{\delta_x^2}\fg^h)_i+(A')^{\ast,h}_i(\fg^h_x)_i\\+H^{\ast,h}_i(\fg^h_x)_i + B^{\ast,h}_i\fg^h_i=f^{\ast,h}_i,\quad 1\leq i\leq N-1,
		\endaligned \end{equation}

		where $f^{\ast,h},\,\,D^{\ast,h},\,A^{\ast,h},\,\,(A')^{\ast,h},\,\,H^{\ast,h},\,\,\,B^{\ast,h}$ are the grid functions corresponding, respectively, to $f(x),\,\,D(x),\,\,A(x),\,A'(x),\,\,H(x),\,\, B(x).$

		We assume that $f(x)$ is continuous in $\Omega=[0,1].$

		The equation is supplemented with homogeneous boundary conditions
		$$\fg^h_0=(\fg^h_x)_0=\fg^h_N=(\fg^h_x)_N=0.$$

		Thus, we seek solution $\fg^h\in l^2_{h,0},$ such that also $\fg^h_x\in l^2_{h,0}.$

		\begin{rem}
			As in Remark ~\ref{remzerodiscderiv} we assume that all grid functions and their Hermitian derivatives are in $l^2_{h,0}.$ This amounts simply to extending the grid functions (whose relevant values are at the interior points $\set{x_i,\,\,1\leq i\leq N-1}$) as zero at the endpoints $x_0,x_N.$
		\end{rem}

		In what follows we designate,
		\begin{equation} \label{eqnoteziwi}
			\begin{cases}

				\fg^h_x,\quad \mbox{the Hermitian derivative of} \,\,\fg^h\,, \\

				\fv^h=\delta^4_x\fg^h, \\

				\fw^h=\widetilde{\delta}_x^2\fg^h = \widetilde{\delta}_x^2\Big(\delta^4_x\Big)^{-1}\fv^h \\

				\fn^h=\delta_x^3\fg^h = \delta_x^3\Big(\delta^4_x\Big)^{-1}\fv^h.

			\end{cases}
		\end{equation}

		The basic result here is that ``stability'' implies ``convergence'' as follows.

		\begin{thm} \label{thmconvgeneralscheme}[\textbf{General convergence}]
			Let $\set{0<N_1<N_2<...N_k<...}$ be an increasing sequence of integers and denote $h_k=\frac{1}{N_k},\,k=1,2,...$

			Let $\set{\fg^{(k)}=\fg^{h_k}\in l^2_{h_k,0},\,k=1,2,...}$ be a sequence of solutions to Equation ~\eqref{utxxxxdiscgen} (with $h=h_k).$ Let $\fv^{(k)}=\fv^{h_k}$ and assume that $\fv_x^{(k)}\in l^2_{h_k,0},\,k=1,2,...$

			Assume that
			\begin{equation} \label{eqbounddeltax4g}
				\sup\set{|\fv^{(k)}|_{h_k}=|\delta^4_x\fg^{(k)}|_{h_k},\,k=1,2,...}<\infty.
			\end{equation}

			Let $g_{h_k},\,v_{h_k}$ be the piecewise linear continuous functions in $\Omega=[0,1]$ corresponding to $\fg^{(k)},\,\fv^{(k)}$ (Definition ~\ref{defP1c}).

			Then these sequences converge to limit functions $g(x),\,\,v(x),$ in the following sense
			\begin{equation}
				\lim\limits_{k\to\infty}g_{h_k}(x)=g(x)\,\,\mbox{in}\,\,\,C(\Omega),
			\end{equation}
			\begin{equation}
				\lim\limits_{k\to\infty}v_{h_k}(x)=v(x)\,\,\mbox{weakly in}\,\,\,L^2(\Omega).
			\end{equation}

			The limit function $g(x)$ is in $H^4(\Omega)\cap H^2_0(\Omega)$ and satisfies Equation ~\eqref{utxxxxcontgen}:

			$$\aligned L_{cont}g=\Big(\parx\Big)^4g+D(x)\Big(\parx\Big)^3g+
			A(x)\Big(\parx\Big)^2g+(A'(x)+H(x))\Big(\parx\Big)g\\+B(x)g=f(x),\quad
			x\in\Omega=[0,1],
			\endaligned$$			
		\end{thm}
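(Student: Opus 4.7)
The plan is to pair the discrete equation \eqref{utxxxxdiscgen} with a smooth cut-off $\phi \in C_0^\infty(\Omega)$, pass to the limit term by term using the compactness machinery of Sections \ref{secdiscsetup}--\ref{seccompact}, and invoke Assumption \ref{assumespLcont} to promote subsequential convergence to full-sequence convergence. The hypothesis $\sup_k |\fv^{(k)}|_{h_k}<\infty$ allows an application of the discrete Rellich theorem (Theorem \ref{thmcompactinverse}) with $\fv^{(k)}=\delta_x^4\fg^{(k)}$, extracting a subsequence $\{k_j\}$ along which $g_{h_{k_j}} \to g$ and $p_{h_{k_j}} \to g'$ uniformly on $\Omega$, $v_{h_{k_j}} \to v$ weakly in $L^2(\Omega)$, with $g \in H^4(\Omega) \cap H^2_0(\Omega)$ and $g^{(4)}=v$. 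Corollary \ref{cordelta2delta4} then yields weak $L^2$-convergence of the piecewise-linear interpolants of $\widetilde{\delta_x^2}\fg^{(k_j)}$ to $g''$, while Corollary \ref{cordelta3conv} provides the distributional limit $(\fn^{(k_j)},\phi^*)_{h_{k_j}} \to \int_0^1 g^{(3)}\phi\,dx$ for every $\phi \in C_0^\infty(\Omega)$.

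Fix $\phi \in C_0^\infty(\Omega)$ and form $(L_{disc,h_{k_j}}\fg^{(k_j)},\phi^*)_{h_{k_j}} = (f^{*,h_{k_j}},\phi^*)_{h_{k_j}}$. The right-hand side tends to $\int_0^1 f\phi\,dx$ by Riemann-sum convergence. On the left, the zeroth- and first-order terms (those with $B$, $A'$, $H$) converge to the expected integrals by the uniform $C(\Omega)$-convergence of the interpolants of $\fg^{(k_j)}$ and $\fg_x^{(k_j)}$ together with continuity of the coefficients. The biharmonic term $(\fv^{(k_j)},\phi^*)_{h_{k_j}}$ converges to $\int g^{(4)}\phi\,dx$ by weak-$L^2$ convergence of $v_{h_{k_j}}$ combined with the identity from \eqref{eqwjphi2} relating the discrete inner product to $\int v_{h_{k_j}}\phi_{j}\,dx$ up to a vanishing correction. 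For the second-order term $(A^{*,h}\fw^{(k_j)},\phi^*) = (\fw^{(k_j)},(A\phi)^{*,h})$, one approximates $A\phi$ in $L^\infty$ by $C_0^\infty(\Omega)$ functions and uses the uniform $l^2_h$-bound on $\fw^{(k_j)}$ from Proposition \ref{propbounded} together with Corollary \ref{cordelta2delta4} to get the limit $\int A g''\phi\,dx$.

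The main obstacle is the third-order term $(D^{*,h}\fn^{(k_j)},\phi^*)_{h_{k_j}} = (\fn^{(k_j)},(D\phi)^{*,h})_{h_{k_j}}$, which we want to equal $\int D g^{(3)}\phi\,dx$. Corollary \ref{cordelta3conv} is stated only for smooth test functions, and its preceding remark explicitly warns that $|\fn^{(k_j)}|_{h_{k_j}}$ need not be uniformly bounded, so the $L^\infty$-density shortcut used for the second-order term is unavailable. My strategy is to approximate $D$ uniformly by $D_\varepsilon \in C^\infty(\Omega)$; Corollary \ref{cordelta3conv} applied to the smooth test function $D_\varepsilon\phi$ gives $(\fn^{(k_j)},(D_\varepsilon\phi)^*)_h \to \int D_\varepsilon g^{(3)}\phi\,dx$, which tends to $\int D g^{(3)}\phi\,dx$ as $\varepsilon \to 0$. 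To control the residual $((D-D_\varepsilon)^{*,h}\fn^{(k_j)},\phi^*)_h$ uniformly in $j$, I would exploit that \eqref{utxxxxdiscgen} forces $D^{*,h}\fn^{(k_j)}$ to be uniformly bounded in $l^2_{h_{k_j}}$ (every other summand is bounded by hypothesis, by coercivity \eqref{eqcoercivity}, and by Proposition \ref{propbounded}), and combine this with the decomposition $\delta_x^3 = 2\delta_x^2\widetilde{\delta_x} - \delta_x\widetilde{\delta_x^2}$ and summation by parts to reroute the differences onto the small factor $(D-D_\varepsilon)\phi$. Making this estimate $\varepsilon$-uniform in $j$ is the delicate technical step of the proof.

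Combining all limits yields $\int_0^1 (L_{cont}g - f)\phi\,dx = 0$ for every $\phi \in C_0^\infty(\Omega)$, so $L_{cont}g = f$ in $L^2(\Omega)$; since $g \in H^4 \cap H^2_0$, this is a classical solution with the boundary conditions \eqref{eqbdrydata}. Assumption \ref{assumespLcont} ensures uniqueness of this $g$. Since every subsequence of the original indices admits a further sub-subsequence whose interpolants converge in the stated senses to the same $g$, the full sequence satisfies $g_{h_k} \to g$ in $C(\Omega)$ and $v_{h_k} \to g^{(4)}$ weakly in $L^2(\Omega)$, completing the proof.
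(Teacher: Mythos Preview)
Your overall architecture matches the paper's proof exactly: pair the discrete equation with $\phi^\ast$, extract a subsequence via Theorem~\ref{thmcompactinverse}, pass to the limit term by term using Corollaries~\ref{cordelta2delta4} and~\ref{cordelta3conv}, and then use Assumption~\ref{assumespLcont} to upgrade subsequential to full convergence. For the biharmonic, second-order, first-order and zeroth-order terms your treatment is the same as the paper's (the paper is in fact terser and simply says ``inserting these limits'').

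The one substantive divergence is the third-order term. The paper dispatches it in a single line: it writes $(D^{\ast,h_{k_j}}\fn^{(k_j)},\phi^\ast)_{h_{k_j}}=(\fn^{(k_j)},(D\phi)^\ast)_{h_{k_j}}$ and applies Corollary~\ref{cordelta3conv} directly with $D\phi$ in the role of the test function, obtaining \eqref{eqlimfnDphi}. It does \emph{not} mollify $D$, does not invoke a density argument, and does not use the observation that $D^{\ast,h}\fn^{(k)}$ is bounded in $l^2_h$. Your concern that Corollary~\ref{cordelta3conv} is stated for $\phi\in C_0^\infty$ while $D$ is only assumed continuous is legitimate, but the paper simply does not address it; in effect $D\phi$ is treated as an admissible test function.

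Your proposed workaround, by contrast, is incomplete as written and looks genuinely hard to close. Knowing that $D^{\ast,h}\fn^{(k_j)}$ is uniformly bounded in $l^2_h$ gives no control over $(D-D_\varepsilon)^{\ast,h}\fn^{(k_j)}$ where $D$ may vanish, and after summation by parts the factors $\delta_x\bigl((D-D_\varepsilon)\phi\bigr)^\ast$ and $\delta_x^2\bigl((D-D_\varepsilon)\phi\bigr)^\ast$ are discrete derivatives of a merely continuous function, so their $l^2_h$ norms are not small (or even bounded) uniformly in $h$. If you want to match the paper, drop the mollification and invoke Corollary~\ref{cordelta3conv} with $D\phi$ directly; if you want to be more rigorous than the paper, you would need either an additional regularity hypothesis on $D$ or a sharpened version of Corollary~\ref{cordelta3conv} valid for compactly supported continuous test functions.
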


		\begin{proof}

			Using the notation
			$$\fw^{(k)}=\fw^{h_k},\,\fn^{(k)}=\fn^{h_k}$$

			and taking the scalar product of Equation ~\eqref{utxxxxdiscgen} with $\phi^\ast$ in $l^2_{h_k,0}$ yields
			\begin{equation} \label{eqdischk} \aligned
				(\delta^4_x\fg^{(k)},\phi^\ast)_{h_k} + (D^{\ast,h_k}\fn^{(k)},\phi^\ast)_{h_k} + (A^{\ast,h_k}\fw^{(k)},\phi^\ast)_{h_k} \\
				+ (((A')^{\ast,h_k}+H^{\ast,h_k}))\fg^{(k)}_x,\phi^\ast)_{h_k} + (B^{\ast,h_k}\fg^{(k)},\phi^\ast)_{h_k}=(f^{\ast,h_k},\phi^\ast)_{h_k}.
			\endaligned \end{equation}

			Note that in the above equation the simplified vector notation			

			$D^{\ast,h_k}\fn^{(k)}=(D^{\ast,h_k}_1\fn^{(k)}_1,\ldots,D^{\ast,h_k}_{N-1}\fn^{(k)}_{N-1})$

			has been used (similarly for the other terms).

			The boundedness assumption ~\eqref{eqbounddeltax4g} enables us to invoke Theorem ~\ref{thmcompactinverse} and Corollary ~\ref{cordelta2delta4}. Thus there exist subsequences $\set{g_j:=g_{h_{k_j}},\,v_j:=v_{h_{k_j}}}_{j=1}^\infty$ and limit functions $g(x),\,\,v(x),$ such that
			\begin{equation}
				\begin{cases}
					\lim\limits_{j\to\infty}g_j(x)=g(x)\,\,\mbox{in}\,\,\,C(\Omega), \\
					\lim\limits_{j\to\infty}v_j(x)=v(x)\,\,\mbox{weakly in}\,\,\,L^2(\Omega).
				\end{cases}
			\end{equation}

			The limit function $g(x)$ is in $H^4(\Omega)\cap H^2_0(\Omega)$ and $\Big(\parx\Big)^4g=v.$

			Denote by $p_{h_k}\,,\,w_{h_k}$ the piecewise linear continuous functions in $\Omega=[0,1]$ corresponding, respectively, to $\fg^{(k)}_x\,,\fw^{(k)}$ (Definition ~\ref{defP1c}). Let $$\set{p_j=p_{h_{k_j}},\,w_j:=w_{h_{k_j}}}_{j=1}^\infty.$$

			From ~\eqref{equnifconvpj} and ~\eqref{eqlimwj} we obtain,
			\begin{equation} \label{eqlimwjconv}
				\begin{cases}
					\lim\limits_{j\to\infty}p_j(x)=g'(x)\,\,\mbox{ in}\,\,\,C(\Omega), \\
					\lim\limits_{j\to\infty}w_j(x)=g''(x)\,\,\mbox{weakly in}\,\,\,L^2(\Omega).
				\end{cases}
			\end{equation}

			Now consider the term $$(D^{\ast,h_{k_j}}\fn^{(k)},\phi^\ast)_{h_{k_j}}=
			(\fn^{(k_j)},(D\phi)^\ast)_{h_{k_j}}.$$

			From Corollary ~\ref{cordelta3conv} we infer
			\begin{equation} \label{eqlimfnDphi}
				\lim\limits_{j\to\infty}(\fn^{(k_j)},(D\phi)^\ast)_{h_{k_j}} = \int\limits_0^1D(x)g^{(3)}(x)\phi(x)dx.
			\end{equation}

			Inserting these limits in ~\eqref{eqdischk} we conclude that the equation $L_{cont}g=f$ is satisfied in the weak sense.

			However, in view of the Assumption ~\ref{assumespLcont} there is a unique solution to this equation, so all subsequences of $\set{g_{h_k},\,v_{h_k}}_{k=1}^\infty$ converge to the same limit. This concludes the proof of the theorem.

		\end{proof}

		A rigorous proof of the optimal convergence rate in the general case is currently missing. However, it has been established for the constant coefficient self-adjoint case (i.e. $D(x)=H(x)\equiv 0$ and constant coefficients $A,\,B$) in \cite[Theorem 5.7]{IMA_SL_paper}. The convergence rates of the first and second-order derivatives $\mathfrak{u}_x = \widetilde{\delta_x} \mathfrak{u}$ and $\widetilde{\delta_x^2} \mathfrak{u}$ to the exact solutions ${u^{(1)}}^\ast$ and ${u^{(2)}}^\ast$ on $\{x_1, ..., x_{N-1}\}$ were proved to be $\frac{27}{8}$, and $\frac{11}{4}$ respectively ~\cite[Corollary 5.9]{IMA_SL_paper}.
	
	\section{\textbf{NUMERICAL  EXAMPLES}} \label{secnumerresults}

		In this section we present two numerical examples that serve to illustrate the effectiveness of the discrete approximation as expressed in Equation ~\eqref{utxxxxdiscgen}. The first example is a rather simple, constant coefficient equation that serves to demonstrate the high order accuracy of the scheme presented here, even for non-homogeneous boundary data. The second example illustrates the accuracy of the method when the coefficients are non-constant, even for highly oscillating solutions.

		For the convenience of the reader, we summarize here the \textbf{notation} introduced in the preceding sections. It will serve below in order to give a clear definition of ``truncation errors'' and ``accuracy errors'', and will also be used  in the presentation of the examples.
		\begin{itemize}
			\item $\{ {u^\ast, u^{(1)}}^\ast, {u^{(2)}}^\ast, {u^{(3)}}^\ast, {u^{(4)}}^\ast \}  $  are the grid functions corresponding to the exact solution and its exact consecutive derivatives.
			\item $\{\widetilde{\delta_x} u^\ast, \widetilde{\delta_x^2} u^\ast, \delta_x^3 u^\ast, \delta_x^4 u^\ast\}$ are the grid functions corresponding to the discrete derivatives of $u^\ast,$ as obtained by the application of the discrete difference operators.
		\end{itemize}

		Observe that the above discrete functions, all related to the exact solution $u^\ast,$ are never computed and are only invoked in error estimates, as discussed below.

		Our algorithm is a finite-difference scheme, aimed at computing the discrete solution $\fu= L_{disc, h}^{-1} f^{\ast}$ approximating the exact (and unknown) $u^\ast$ (see Equation \eqref{utxxxxdiscgen} where it is designated as $\fg^h$). It also provides  the consecutive discrete derivatives of $\fu,$ namely  $\widetilde{\delta_x} \mathfrak{u}$, $\widetilde{\delta_x^2} \mathfrak{u}$, $\delta_x^3 \mathfrak{u}$ and $\delta_x^4 \mathfrak{u}$.
		
		Let us clarify the following interpretation of discrete errors, for a given mesh size $h$:
		\begin{itemize}
		\item
			\textit{Truncation errors} refer to the differences between $\{ {u^{(1)}}^\ast, {u^{(2)}}^\ast, {u^{(3)}}^\ast, {u^{(4)}}^\ast \}$ and $\{\widetilde{\delta_x} u^\ast, \widetilde{\delta_x^2} u^\ast, \delta_x^3 u^\ast, \delta_x^4 u^\ast\}$ respectively.
			Note that such errors are calculus facts that do not rely on any differential equation. A detailed analysis of the truncation errors is supplied in Appendix \ref{appendix_truncation}.
		\item
			\textit{Accuracy errors} refer to the differences between $\{u^\ast, {u^{(1)}}^\ast, {u^{(2)}}^\ast, {u^{(3)}}^\ast, {u^{(4)}}^\ast \}$ and $\{\mathfrak{u}, \widetilde{\delta_x} \mathfrak{u}, \widetilde{\delta_x^2} \mathfrak{u}, \delta_x^3 \mathfrak{u}, \delta_x^4 \mathfrak{u}\}$ respectively, where as above $\mathfrak{u}$ is the solution to Equation \eqref{utxxxxdiscgen} (designated there as $\fg^h$ ).
		\end{itemize}
	
		The numerical examples show that our algorithm provides  highly accurate approximations to the exact solution and its derivatives $\{ {u^\ast, u^{(1)}}^\ast, {u^{(2)}}^\ast, {u^{(3)}}^\ast, {u^{(4)}}^\ast \}.$ Somewhat surprisingly, this approximation is more accurate than the respective \textit{truncation errors.}
		In fact we shall see that absolute errors are generally smaller by an order of magnitude (for a given mesh size $h$), and \textbf{the convergence rate at all points $\{x_0, ..., x_N\}$ is roughly 4} . We refer to the fourth-order convergence as ``optimal rate'' of convergence.

		The convergence rate of some norm error $\varepsilon$ is calculated as follows: Let $\varepsilon_1, \varepsilon_2$ be consecutive errors calculated with $N=N_1,N_2$ respectively. The convergence rate of $\varepsilon$ between these two calculations is then
		\begin{equation} \label{eq_convergence}
			\text{convergence-rate}_{1\to 2} = \log_{N_2/N_1}(\varepsilon_1/\varepsilon_2).
		\end{equation}

		\subsection{First test problem} \label{ex1_section}
		
		Consider Equation ~\eqref{utxxxxcontgen}, with constant coefficients $D=10, A=10^2, H=10^3, B=10^4$.
		
		The right-hand side is given by
		\begin{align*}
			f(x) = \{&\cos(2\pi x) [1-24\pi^2+16\pi^4+D(1-12\pi^2)+A(1-4\pi^2)+ H + B] \\ - 2\pi &\sin(2\pi x) [4-16\pi^2+D(3-4\pi^2)+ 2A + H]\} \cdot e^x.
		\end{align*}
		We solve the problem on the interval $\Omega=(0.3, 1.4)$, with the boundary conditions:
		\begin{gather*}
			u(0.3) = -0.417129, \quad u(1.4) = -3.28073,\\
			u'(0.3) = -8.48343, \quad u'(1.4) = -18.2572.
		\end{gather*}
		
		It is straightforward to check that the solution $u(x)$ is given by
		\begin{equation} \label{eq_ex1}
			u(x) = e^x \cos(2 \pi x),
		\end{equation}
		
		and its derivatives are
		\begin{align*}
			u^{(1)}(x) &= e^x (\cos(2\pi x) - 2 \pi \sin(2 \pi x)), \\
			u^{(2)}(x) &= e^x((1-4 \pi^2) \cos(2\pi x) - 4\pi \sin(2\pi x)), \\
			u^{(3)}(x) &= e^x((1-12 \pi^2) \cos(2\pi x) + 2\pi (4 \pi^2-3) \sin(2\pi x)), \\
			u^{(4)}(x) &= e^x((1-24 \pi^2+16 \pi^4) \cos(2\pi x) + 8\pi (4 \pi^2-1) \sin(2\pi x)).
		\end{align*}
		
		The grid values $N =8, 16, 32, 64$ and $128$ are used in subsequent solutions of the problem.
		
		The results with $N=32$ are plotted in Figure \ref{example1_plots}.
		
		The norms (both $|\cdot|_\infty$ and $|\cdot|_h$) of the truncation errors of the discrete difference operators, and their respective convergence rates are presented in Table \ref{tab_example1_trunc}.
		
		The norms of accuracy errors and their convergence rates are presented in Table \ref{tab_example1_err}.
		
		Finally, the pointwise convergence rates (at grid points) of the truncation errors, when comparing calculations with $N=32$ and $N=64$, are plotted in Figure \ref{ex1_cv_per_x}. Note that results at the boundary points $x_0, x_N$ match the expected convergence rates as per Section \ref{trunc_boundary}.

		\begin{figure}
			
			
			\includegraphics[width=\linewidth]{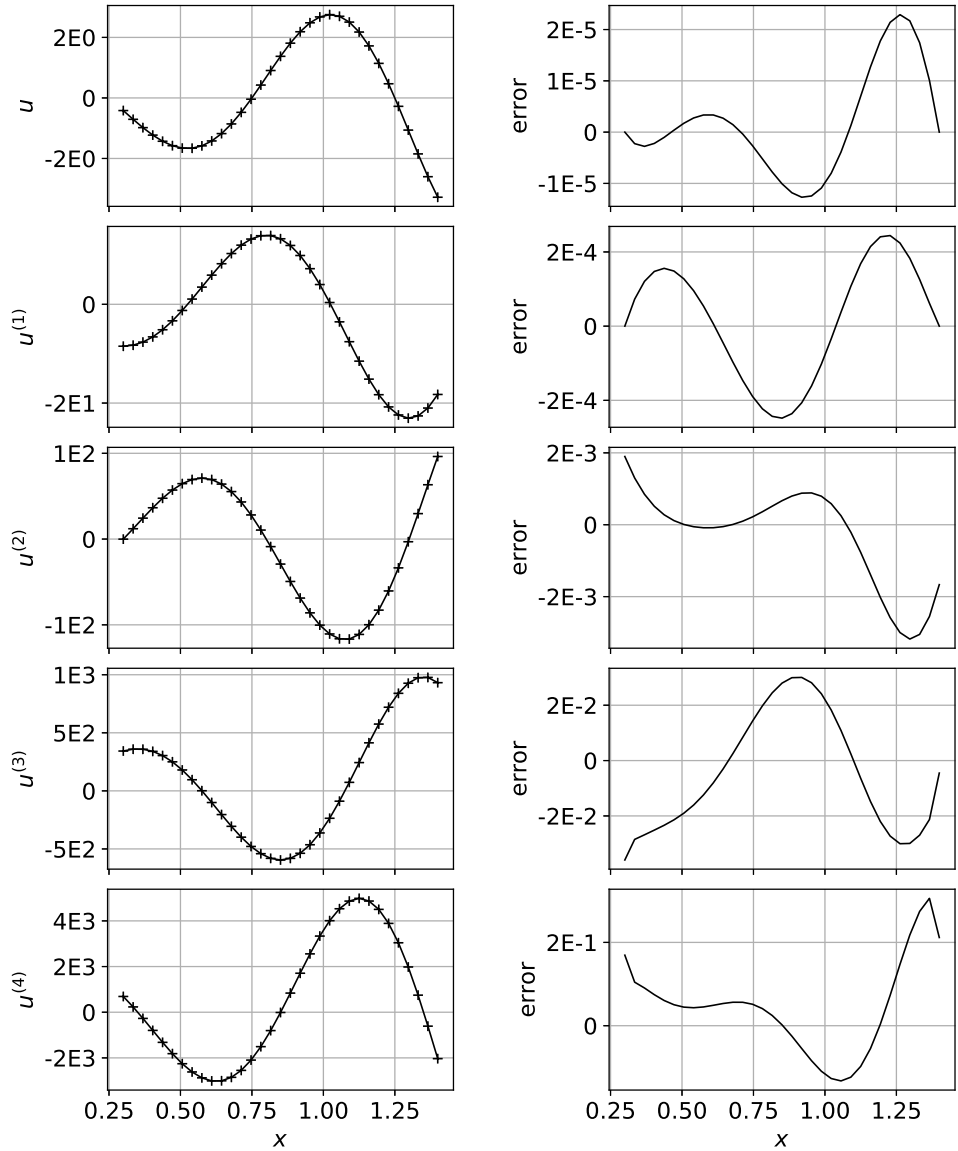}
			
			\caption{(left column) exact solution (solid line) and calculated discrete values (+ markers) of $u$ and its derivatives for the test problem in Subsection \ref{ex1_section}, and (right column) the difference between the exact solution and the calculated values, with $N=32$.}
			
			\label{example1_plots}
			
		\end{figure}
		
		{\setlength{\extrarowheight}{4pt}
			\begin{table}
				
				\begin{tabularx}{\linewidth}{
						|>{\hsize=1.4\hsize}X|
						>{\hsize=0.55\hsize}X|
						>{\hsize=0.55\hsize}X|
						>{\hsize=0.55\hsize}X|
						>{\hsize=0.55\hsize}X|
						>{\hsize=0.55\hsize}X|
					}
					\hline
					$N$ & 8 & 16 & 32 & 64 & 128 \\			
					\hline
					$|\widetilde{\delta_x}u^\ast-{u^{(1)}}^\ast|_h$
					& 4.40E-2 & 2.63E-3 & 1.62E-4 & 1.01E-5 & 6.32E-7 \\
					convergence rate & & 4.06 & 4.02 & 4.01 & 4.0 \\
					$|\widetilde{\delta_x}u^\ast-{u^{(1)}}^\ast|_\infty$
					& 8.33E-2 & 6.22E-3 & 4.05E-4 & 2.56E-5 & 1.60E-6 \\
					convergence rate & & 3.74 & 3.94 & 3.99 & 4.0 \\
					\hline
					$|\widetilde{\delta_x^2}u^\ast-{u^{(2)}}^\ast|_h$
					& 1.35E-1 & 1.19E-2 & 9.90E-4 & 8.32E-5 & 7.12E-6 \\
					convergence rate & & 3.5 & 3.59 & 3.57 & 3.55 \\
					$|\widetilde{\delta_x^2}u^\ast-{u^{(2)}}^\ast|_\infty$
					& 2.73E-1 & 2.93E-2 & 4.18E-3 & 5.41E-4 & 6.82E-5 \\
					convergence rate & & 3.22 & 2.81 & 2.95 & 2.99 \\
					\hline
					$|\delta_x^3 u^\ast-{u^{(3)}}^\ast|_h$
					& 6.52E+0 & 7.14E-1 & 1.00E-1 & 1.64E-2 & 2.83E-3 \\
					convergence rate & & 3.19 & 2.83 & 2.61 & 2.53 \\
					$|\delta_x^3 u^\ast-{u^{(3)}}^\ast|_\infty$
					& 1.37E+1 & 2.45E+0 & 4.89E-1 & 1.12E-1 & 2.70E-2 \\
					convergence rate & & 2.48 & 2.33 & 2.13 & 2.05 \\
					\hline
					$|\delta_x^4 u^\ast-{u^{(4)}}^\ast|_h$
					& 7.94E+1 & 2.59E+1 & 8.98E+0 & 3.16E+0 & 1.12E+0 \\
					convergence rate & & 1.62 & 1.53 & 1.51 & 1.50 \\
					$|\delta_x^4 u^\ast-{u^{(4)}}^\ast|_\infty$
					& 2.00E+2 & 9.11E+1 & 4.47E+1 & 2.22E+1 & 1.11E+1 \\
					convergence rate & & 1.13 & 1.03 & 1.01 & 1.0 \\
					\hline
				\end{tabularx}
				\caption{Truncation errors in the various discrete difference operators and their convergence rates, calculated for the test problem in Subsection \ref{ex1_section}. The convergence rates are calculated as per Equation \eqref{eq_convergence}.}
				
				\label{tab_example1_trunc}
				
			\end{table}
		}
		
		{\setlength{\extrarowheight}{4pt}%
			\begin{table}
				\begin{tabularx}{\linewidth}{
						|>{\hsize=1.4\hsize}X|
						>{\hsize=0.55\hsize}X|
						>{\hsize=0.55\hsize}X|
						>{\hsize=0.55\hsize}X|
						>{\hsize=0.55\hsize}X|
						>{\hsize=0.55\hsize}X|
					}
					
					\hline
					$N$ & 8 & 16 & 32 & 64 & 128 \\	
					\hline
					$|\mathfrak{u}-u^\ast|_h$
					& 2.66E-3 & 1.71E-4 & 1.07E-5 & 6.65E-7 & 4.14E-8 \\
					convergence rate & & 3.96 & 4.00 & 4.00 & 4.00 \\
					$|\mathfrak{u}-u^\ast|_\infty$
					& 6.07E-3 & 3.71E-4 & 2.29E-5 & 1.43E-6 & 8.90E-8 \\
					convergence rate & & 4.03 & 4.02 & 4.01 & 4.0 \\
					\hline
					$|\widetilde{\delta_x}\mathfrak{u}-{u^{(1)}}^\ast|_h$
					& 4.30E-2 & 2.66E-3 & 1.66E-4 & 1.03E-5 & 6.45E-7 \\
					convergence rate & & 4.01 & 4.01 & 4.00 & 4.00 \\
					$|\widetilde{\delta_x}\mathfrak{u}-{u^{(1)}}^\ast|_\infty$
					& 6.44E-2 & 3.99E-3 & 2.48E-4 & 1.55E-5 & 9.65E-7 \\
					convergence rate & & 4.01 & 4.01 & 4.00 & 4.00 \\
					\hline
					$|\widetilde{\delta_x^2}\mathfrak{u}-{u^{(2)}}^\ast|_h$
					& 4.33E-1 & 2.41E-2 & 1.45E-3 & 8.97E-5 & 5.56E-6 \\
					convergence rate & & 4.17 & 4.05 & 4.02 & 4.01 \\
					$|\widetilde{\delta_x^2}\mathfrak{u}-{u^{(2)}}^\ast|_\infty$
					& 7.56E-1 & 4.94E-2 & 3.18E-3 & 1.98E-4 & 1.24E-5 \\
					convergence rate & & 3.94 & 3.96 & 4.0 & 4.0 \\
					\hline
					$|\delta_x^3\mathfrak{u}-{u^{(3)}}^\ast|_h$
					& 6.44E+0 & 3.83E-1 & 2.31E-2 & 1.42E-3 & 8.84E-5 \\
					convergence rate & & 4.07 & 4.05 & 4.02 & 4.01 \\
					$|\delta_x^3\mathfrak{u}-{u^{(3)}}^\ast|_\infty$
					& 7.77E+0 & 6.27E-1 & 3.59E-2 & 2.08E-3 & 1.25E-4 \\
					convergence rate & & 3.63 & 4.12 & 4.11 & 4.06 \\
					\hline
					$|\delta_x^4\mathfrak{u}-{u^{(4)}}^\ast|_h$
					& 2.27E+1 & 1.92E+0 & 1.25E-1 & 7.80E-3 & 4.90E-4 \\
					convergence rate & & 3.56 & 3.95 & 4.00 & 3.99 \\
					$|\delta_x^4\mathfrak{u}-{u^{(4)}}^\ast|_\infty$
					& 3.08E+1 & 4.33E+0 & 3.06E-1 & 1.94E-2 & 1.21E-3 \\
					convergence rate & & 2.83 & 3.82 & 3.98 & 4.0 \\
					\hline
				\end{tabularx}
				
				\caption{Accuracy errors in the approximations to $\{u^\ast, {u^{(1)}}^\ast, {u^{(2)}}^\ast, {u^{(3)}}^\ast, {u^{(4)}}^\ast \}$ by $\mathfrak{u}$ (Equation \eqref{utxxxxdiscgen}) and its discrete derivatives $\{ \widetilde{\delta_x} \mathfrak{u}, \widetilde{\delta_x^2} \mathfrak{u}, \delta_x^3 \mathfrak{u}, \delta_x^4 \mathfrak{u} \}$, for the test problem in Subsection \ref{ex1_section}. The convergence rates are calculated as per Equation \eqref{eq_convergence}. These results are analogous to those presented in Table \ref{tab_example1_trunc}, but the discrete operators are now applied to the calculated function $\mathfrak{u}$ instead of the exact solution $u^\ast$. Note that the absolute errors and convergence rates are significantly improved as a result.}
				
				\label{tab_example1_err}
				
			\end{table}
		}
		
		\begin{figure}
			
			\includegraphics[width=\linewidth]{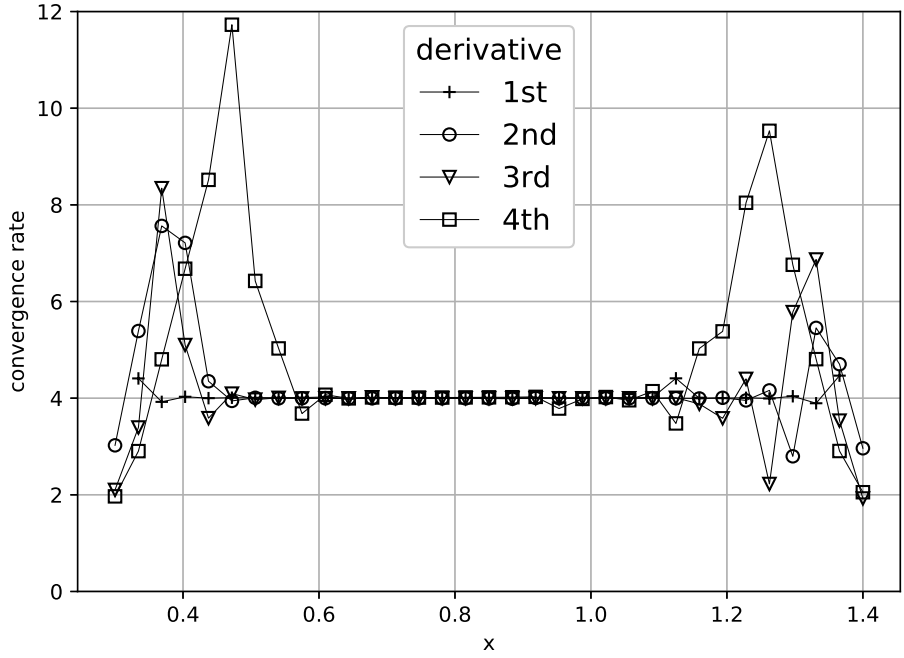}
			
			\caption{The pointwise truncation errors (at grid points) of $\widetilde{\delta_x} u^\ast, \widetilde{\delta_x^2} u^\ast, \delta_x^3 u^\ast$ and $\delta_x^4 u^\ast$ (compared to ${u^{(1)}}^\ast, {u^{(2)}}^\ast, {u^{(3)}}^\ast$ and ${u^{(4)}}^\ast$ respectively), for the test problem in Subsection \ref{ex1_section}. The convergence rates are obtained by comparing calculations with $N=32$ and $N=64$. The convergence rate at each point is calculated using Equation \eqref{eq_convergence}.}
			
			\label{ex1_cv_per_x}
			
		\end{figure}
		
		\clearpage
		\subsection{Second test problem} \label{ex2_section}
		
		We examine the following function, used in \cite{IMA_SL_paper} to test the numerical method on highly oscillatory functions:
		\begin{gather} \label{eq_ex2}
			u(x) = p(x) \sin(\frac{1}{q(x)+\varepsilon}), \\
			p(x) = x^2(1-x)^2, \quad q(x) = (x-\frac{1}{2})^2. \nonumber
		\end{gather}
		
		This boundary value problem is solved on the interval $\Omega=[0,1]$, so the boundary conditions are homogeneous (i.e. $u(0)=u(1)=u'(0)=u'(1)=0$).
		
		The parameter $\varepsilon$ controls oscillation frequency. In this example the value $\varepsilon = \frac{1}{40}$ is used, which provides a wavelength roughly comparable to $\varepsilon$.
		
		The coefficient functions used in \cite{IMA_SL_paper} are equivalent to
		\begin{equation} \label{eq_old_coefficients}
			\begin{aligned}
				\phantom{'} A(x) = \; & \alpha(1+\frac{1}{2}\sin(40\pi x)), \\
				A'(x) = & \; 20\; \pi\; \alpha\; \cos(40\pi x), \\
				\phantom{'}B(x) = \; & \beta\; \sin(40\pi x),
			\end{aligned}
		\end{equation}
		with $\alpha = 10^4$ and $\beta = 10^8$ chosen to ensure that the magnitudes of various terms in the differential equation are roughly equal. As the paper \cite{IMA_SL_paper} deals with the self-adjoint case, the remaining coefficients are
		\begin{equation*}
			H(x) = D(x) = 0. \\
		\end{equation*}
		
		In this work we can add the third-order derivative term, by taking these coefficient functions to be
		\begin{equation} \label{eq_new_coefficients}
			\begin{aligned}
			H(x) = & \; 0, \\
			D(x) = & \; \gamma\; \cos(40 \pi x),
			\end{aligned}
		\end{equation}
		with $\gamma=10^2$, again to ensure that the third-order term is comparable to the others. The coefficient functions in \eqref{eq_old_coefficients} remain unchanged.
		
		The detailed expression of $f(x)$ is too tedious to be fully presented here. The function itself is plotted in Figure \ref{ex2_f}.
		
		\begin{figure}
			
			\includegraphics[width=\linewidth]{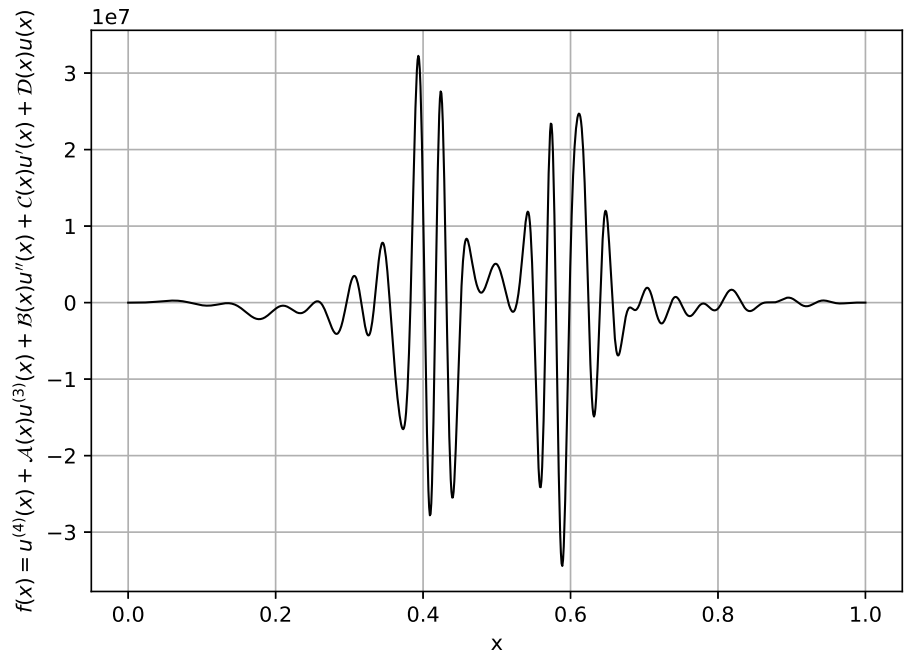}

			\caption{The function $f(x)$ obtained from Equation ~\eqref{utxxxxcontgen}, using $u$ as in Equation \eqref{eq_ex2}. The coefficients $A, B, D$ and $H$ are given in equations \eqref{eq_old_coefficients} and \eqref{eq_new_coefficients}.}
			
			\label{ex2_f}
			
		\end{figure}
		
		The calculated results with $N=512$ are plotted in Figure \ref{example2_plots}. The accuracy errors (with $|\cdot|_\infty$ and $|\cdot|_h$ norms) and their convergence rates are presented in Table \ref{tab_example2_err}.
		
		\begin{figure}
			
			\includegraphics[width=\linewidth]{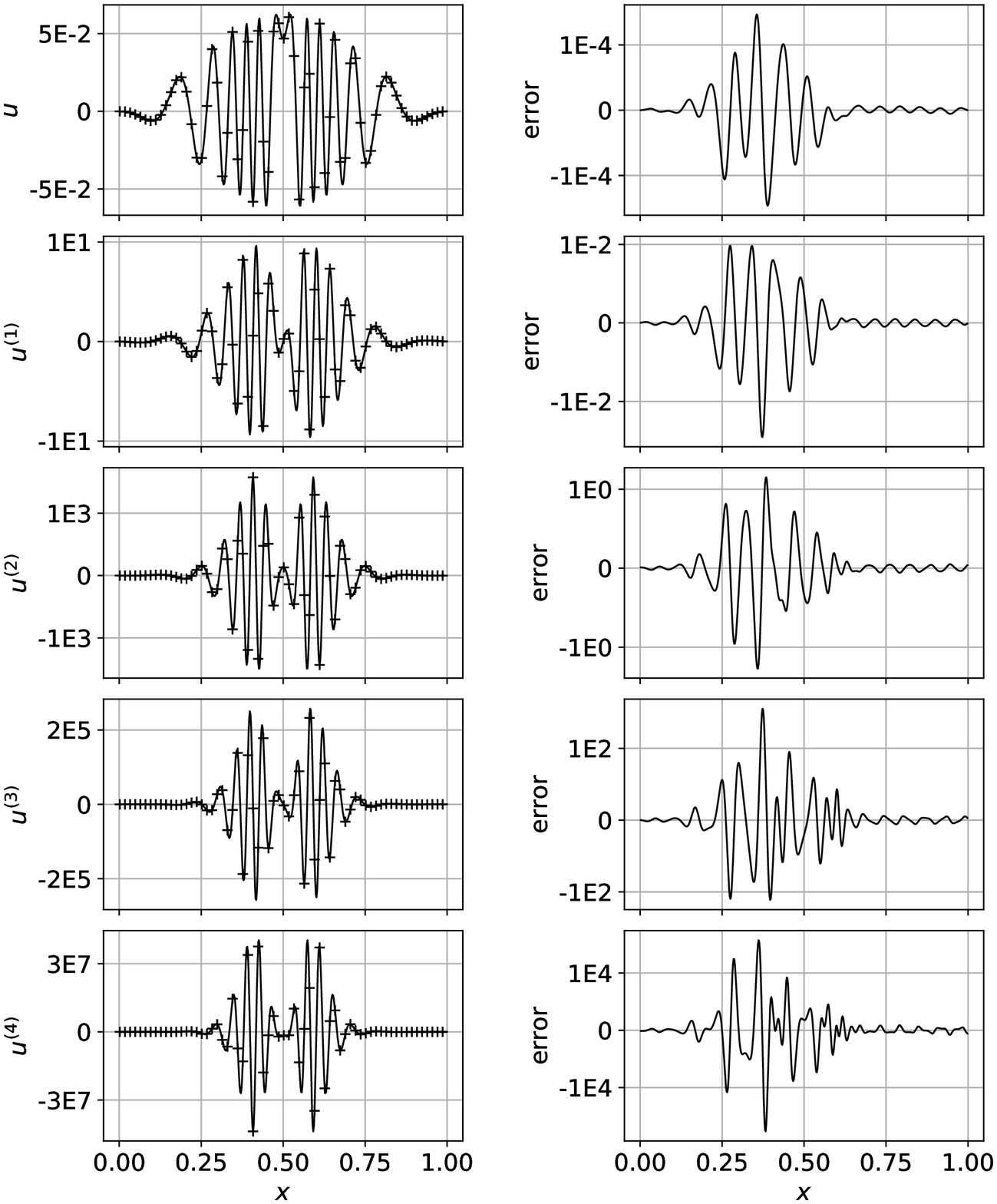}
			
			\caption{(left column) Exact solution (solid line) and calculated values (+ markers, where every eighth value is marked to avoid clatter) of $\fu$ and its discrete derivatives for the test problem in Subsection \ref{ex2_section}, with $N=512$. (right column) Difference between the exact solution and the calculated values.}
			
			\label{example2_plots}
			
		\end{figure}
		
		{\setlength{\extrarowheight}{4pt}
			\begin{table}
				\begin{tabularx}{\linewidth}{
						|>{\hsize=1.4\hsize}X|
						>{\hsize=0.55\hsize}X|
						>{\hsize=0.55\hsize}X|
						>{\hsize=0.55\hsize}X|
						>{\hsize=0.55\hsize}X|
						>{\hsize=0.55\hsize}X|
					}
					
					\hline
					$N$ & 32 & 128 & 512 & 2048 & 8192 \\	
					\hline
					$|\mathfrak{u}-u^\ast|_h$
					& 5.59E-1 & 9.27E-3 & 4.15E-5 & 1.64E-7 & 2.73E-9 \\
					convergence rate & & 2.96 & 3.90 & 3.99 & 2.95 \\
					$|\mathfrak{u}-u^\ast|_\infty$
					& 2.27 & 3.24E-2 & 1.47E-4 & 5.78E-7 & 7.63E-9 \\
					convergence rate & & 3.06 & 3.89 & 3.99 & 3.12 \\
					\hline
					$|\widetilde{\delta_x}\mathfrak{u}-{u^{(1)}}^\ast|_h$
					& 2.64E+1 & 7.86E-1 & 3.52E-3 & 1.38E-5 & 2.49E-7 \\
					convergence rate & & 2.54 & 3.90 & 3.99 & 2.90 \\
					$|\widetilde{\delta_x}\mathfrak{u}-{u^{(1)}}^\ast|_\infty$
					&9.71E+1 & 3.18 & 1.46E-2 & 5.74E-5 & 6.49E-7 \\
					convergence rate & & 2.47 & 3.88 & 3.99 & 3.23 \\
					\hline
					$|\widetilde{\delta_x^2}\mathfrak{u}-{u^{(2)}}^\ast|_h$
					& 2.64E+3 & 7.23E+1 & 3.21E-1 & 1.26E-3 & 2.48E-5 \\
					convergence rate & & 2.60 & 3.91 & 3.99 & 2.83 \\
					$|\widetilde{\delta_x^2}\mathfrak{u}-{u^{(2)}}^\ast|_\infty$
					& 9.14E+3 & 2.90E+2 & 1.27 & 5.03E-3 & 6.92E-5 \\
					convergence rate & & 2.49 & 3.92 & 3.99 & 3.09 \\
					\hline
					$|\delta_x^3\mathfrak{u}-{u^{(3)}}^\ast|_h$
					& 1.30E+5 & 7.83E+3 & 3.49E+1 & 1.37E-1 & 2.59E-3 \\
					convergence rate & & 2.03 & 3.90 & 3.99 & 2.86 \\
					$|\delta_x^3\mathfrak{u}-{u^{(3)}}^\ast|_\infty$
					& 4.82E+5 & 3.43E+4 & 1.55E+2 & 6.14E-1 & 7.18E-3 \\
					convergence rate & & 1.90 & 3.89 & 3.99 & 3.21 \\
					\hline
					$|\delta_x^4\mathfrak{u}-{u^{(4)}}^\ast|_h$
					& 1.72E+7 & 8.41E+5 & 3.70E+3 & 1.46E+1 & 9.77E-1 \\
					convergence rate & & 2.18 & 3.91 & 3.99 & 1.95 \\
					$|\delta_x^4\mathfrak{u}-{u^{(4)}}^\ast|_\infty$
					& 5.34E+7 & 4.05E+6 & 1.77E+4 & 6.98E+1 & 6.59 \\
					convergence rate & & 1.86 & 3.92 & 3.99 & 1.70 \\
					\hline
				\end{tabularx}
				
				\caption{Accuracy errors in the approximations to $\{u^\ast, {u^{(1)}}^\ast, {u^{(2)}}^\ast, {u^{(3)}}^\ast, {u^{(4)}}^\ast \}$ by $\mathfrak{u}$ (Equation \eqref{utxxxxdiscgen}) and its discrete derivatives $\{ \widetilde{\delta_x} \mathfrak{u}, \widetilde{\delta_x^2} \mathfrak{u}, \delta_x^3 \mathfrak{u}, \delta_x^4 \mathfrak{u} \}$, for the test problem in Subsection \ref{ex2_section}. The convergence rates are calculated as per Equation \eqref{eq_convergence}. }
				
				\label{tab_example2_err}
				
			\end{table}
		}
		
	\clearpage
		
	\appendix
	\section{\textbf{TRUNCATION ERRORS}}
	\label{appendix_truncation}

		In this appendix we give a systematic derivation of the truncation errors for the discrete operators $\widetilde{\delta_x}, \widetilde{\delta_x^2}, \delta_x^3$ and $\delta_x^4$, as presented in Sections ~\ref{secdiscsetup} and ~\ref{secboundary}. In the first four Subsections ~\ref{trunc_1st_deriv} through ~\ref{trunc_4th_deriv} we establish optimal (fourth-order) truncation errors at interior points, and slower convergence on some near-boundary points, for all discrete derivatives. The truncation errors at the remaining (boundary and near-boundary points) are studied in subsections \ref{trunc_boundary} and \ref{trunc_3rd_deriv_near boundary}. These truncation errors are corroborated by the numerical results presented in Section \ref{secnumerresults}. As in Section ~\ref{secdiscsetup} we take $\Omega=[0,1].$
		
		\subsection{Hermitian derivative operator $\widetilde{\delta_x}$}
		\label{trunc_1st_deriv}
			
			We want to estimate the truncation error of the Hermitian derivative operator \eqref{eqoptilddelta}, i.e.:
			\begin{equation*}
				(\widetilde{\delta_x} u^\ast)_j - (u^{(1)})^\ast_j, \quad \forall j \in \{1,...,N-1\}.
			\end{equation*}
			
			Recall that the error on boundary points is identically zero.
			
			Equation (A.1) in \cite{Time_Evolution} specifies this error to fifth-order, without the actual derivation. We go here further up to the sixth-order term in order to prove 4th-order truncation errors of the various derivatives.
			
			For some sufficiently smooth $v : \Omega \rightarrow \mathbb{R}$ we use the Taylor expansion
			\begin{align} \label{eq_taylor}
				v^\ast_{j\pm1} =& v(x_j) \pm h v^{(1)}(x_j) + \frac{h^2}{2}v^{(2)}(x_j) \pm \frac{h^3}{6}v^{(3)}(x_j)\\
				&+ \frac{h^4}{24}v^{(4)}(x_j) \pm \frac{h^5}{120}v^{(5)}(x_j) + \frac{h^6}{720}v^{(6)}(x_j) + O(h^7), \nonumber \\ &\forall j \in \{1,...,N-1\}. \nonumber
			\end{align}
			
			Substituting \eqref{eq_taylor} into Equation \eqref{eqdefsigmasimpson} we get
			\begin{align} \label{eq_sigmax_taylor}
				(\sigma_x v^\ast)_j \equiv& \frac{v^\ast_{j-1} + 4 v^\ast_j + v^\ast_{j+1}}{6} \\ =& v(x_j) + \frac{h^2}{6}v^{(2)}(x_j) + \frac{h^4}{72}v^{(4)}(x_j) + \frac{h^6}{2160}v^{(6)}(x_j) + O(h^8), \nonumber \\ &\forall j \in \{1,...,N-1\}. \nonumber
			\end{align}
			
			Likewise, substituting the Taylor expansion into Equation \eqref{eq_delta_op} yields
			\begin{align} \label{eq_deltax_taylor}
				(\delta_x v^\ast)_j \equiv& \frac{v^\ast_{j+1}-v^\ast_{j-1}}{2h} \\ =& v^{(1)}(x_j) + \frac{h^2}{6}v^{(3)}(x_j) + \frac{h^4}{120}v^{(5)}(x_j) + \frac{h^6}{5040}v^{(7)}(x_j) + O(h^8), \nonumber \\ &\forall j \in \{1,...,N-1\} .\nonumber
			\end{align}
			
			Finally, by Equation ~\eqref{eq_delta2_op}
			\begin{align} \label{eq_deltax2_taylor}
				(\delta_x^2 v^\ast)_j &\equiv \frac{v^\ast_{j+1}-2v^\ast_j+v^\ast_{j-1}}{h^2} \\
				&= v^{(2)}(x_j) + \frac{h^2}{12} v^{(4)}(x_j) + \frac{h^4}{360} v^{(6)}(x_j) + O(h^6), \nonumber \\ &\forall j \in \{1,...,N-1\} .\nonumber
			\end{align}
			
			In the above equations we replace $v$ by $u$ and its derivatives. First, from \eqref{eq_sigmax_taylor} and \eqref{eq_deltax_taylor} we get
			\begin{align} \label{eq_her_der_error1}
				(\sigma_x u^{(1)\ast})_j - (\delta_x u^\ast)_j = & \frac{h^4}{180}u^{(5)}(x_j) + \frac{h^6}{3780}u^{(7)}(x_j) + O(h^8), \\ & \forall j \in \{1,...,N-1\}. \nonumber
			\end{align}

			By definition we have $\delta_x u^\ast = \sigma_x \widetilde{\delta_x} u^\ast$. Therefore Equation \eqref{eq_her_der_error1} can be written as
			\begin{equation} \label{eq_her_der_error2}
				\sigma_x (u^{(1)\ast} - \widetilde{\delta_x} u^\ast) = \frac{h^4}{180}u^{(5)\ast} + \frac{h^6}{3780}u^{(7)\ast} + O(h^8).
			\end{equation}
			
			Notice that $\sigma_x = I+\frac{h^2}{6} \delta_x^2$, where $I$ is the identity operator. We therefore know that the inverse $\sigma_x^{-1}$ exists, and we can express it with the Neumann series
			\begin{equation} \label{eq_inv_sigma_neumann}
				\sigma_x^{-1} = I - \frac{h^2}{6}\delta_x^2 + O(h^4).
			\end{equation}
			
			Now, Equation \eqref{eq_her_der_error2} can be rewritten as follows
			\begin{align*}
				\widetilde{\delta_x} u^\ast =& u^{(1)\ast} - \sigma_x^{-1}(\frac{h^4}{180}u^{(5)\ast} + \frac{h^6}{3780}u^{(7)\ast} + O(h^8)) \\
				=& u^{(1)\ast} - \frac{h^4}{180}u^{(5)\ast} - \frac{h^6}{3780}u^{(7)\ast} + \frac{h^2}{6}\delta_x^2(\frac{h^4}{180}u^{(5)\ast}) + O(h^8).
			\end{align*}
			
			Finally, using Equation \eqref{eq_deltax2_taylor} on the last term gives the following relation.
			\begin{equation} \label{eq_her_der_trunc_error}
				(\widetilde{\delta_x} u^\ast)_j = u_j^{(1)\ast} - \frac{h^4}{180}u_j^{(5)\ast} + \frac{h^6}{1512} u_j^{(7)\ast} + O(h^8), \quad \forall j\in\{1,...,N-1\}.
			\end{equation}
			
			Recall that $(\widetilde{\delta_x} u^\ast)_0 = u_0^{(1)\ast}$ and $(\widetilde{\delta_x} u^\ast)_N = u_N^{(1)\ast}$ are given.
			
		\subsection{Second-order discrete derivative operator $\widetilde{\delta_x^2}$}
		\label{trunc_2nd_deriv}
			
			For some $v : \Omega \rightarrow \mathbb{R}$ and for the interior points $\{x_2, ..., x_{N-2}\}$ we can write this derivative explicitly as a linear combination of the operators $\delta_x^2$ and $\delta_x \widetilde{\delta_x}$ (see \eqref{eqdefdelta2tild}).
			
			Substituting \eqref{eq_her_der_trunc_error} into Equation \eqref{eq_delta_op} entails \footnote{While we do not use the sixth-order term of $\widetilde{\delta_x} u^\ast$, we need to know that it does not become of fifth-order when we apply to it the $\delta_x$ operator, as is the case with higher-order terms on near-boundary points.}
			
			\begin{equation}\label{eqdelxtilddelx} \aligned
				(\delta_x \widetilde{\delta_x} u^\ast)_j =& \frac{{u^{(1)}_{j+1}}^\ast-{u^{(1)}_{j-1}}^\ast}{2h} - \frac{h^4}{180}\frac{{u^{(5)}_{j+1}}^\ast-{u^{(5)}_{j-1}}^\ast}{2h} + \frac{h^6}{1512}\frac{{u^{(7)}_{j+1}}^\ast-{u^{(7)}_{j-1}}^\ast}{2h} + O(h^7), \\ &\forall j\in\{2,...,N-2\}.
			\endaligned \end{equation}
			On near-boundary points we supplement Equation \eqref{eq_her_der_trunc_error} with the given boundary values $(\widetilde{\delta_x} u^\ast)_0 = u_0^{(1)\ast}$ and $(\widetilde{\delta_x} u^\ast)_N = u_N^{(1)\ast}$, so that
			\begin{align*}
				(\delta_x \widetilde{\delta_x} u^\ast)_1 =& \frac{u^{(1)\ast}_{2}-u^{(1)\ast}_{0}}{2h} - \frac{h^4}{180}\frac{u^{(5)\ast}_{2}}{2h} + \frac{h^6}{1512}\frac{u^{(7)\ast}_{2}}{2h} + O(h^7), \\
				(\delta_x \widetilde{\delta_x} u^\ast)_{N-1} =& \frac{u^{(1)\ast}_{N}-u^{(1)\ast}_{N-2}}{2h} - \frac{h^4}{180}\frac{u^{(5)\ast}_{N-2}}{2h} + \frac{h^6}{1512}\frac{u^{(7)\ast}_{N-2}}{2h} + O(h^7).
			\end{align*}
			
			Now, invoking Equation \eqref{eq_deltax_taylor} in Equation ~\eqref{eqdelxtilddelx} gives at interior points
			\begin{align} \label{eq_deltax_deltax_tilde_u}
				(\delta_x \widetilde{\delta_x} u^\ast)_j =& u^{(2)}(x_j) + \frac{h^2}{6} u^{(4)}(x_j) + \frac{h^4}{360}u^{(6)}(x_j) + O(h^6), \\ & \forall j\in\{2,...,N-2\}. \nonumber
			\end{align}
			
			Combining this result with \eqref{eq_deltax2_taylor} and \eqref{eqdefdelta2tild} we obtain
			
			\begin{equation}\label{eq_deltax2_tilde_trunc}
				(\widetilde{\delta_x^2} u^\ast)_j = u_j^{(2)}(x_j) + O(h^4), \quad \forall j\in\{2,...,N-2\}.
			\end{equation}
			
			At near boundary points we have a similar, but weaker result.
			\begin{align} \label{eq_deltax_deltax_tilde_u_near_boundary}
				(\delta_x \widetilde{\delta_x} u^\ast)_1 =& u^{(2)}(x_1) + \frac{h^2}{6} u^{(4)}(x_1) - \frac{h^3}{360}{u^{(5)}_{2}}^\ast + O(h^4), \\
				(\delta_x \widetilde{\delta_x} u^\ast)_{N-1} =& u^{(2)}(x_{N-1}) + \frac{h^2}{6} u^{(4)}(x_{N-1}) + \frac{h^3}{360}{u^{(5)}_{N-2}}^\ast + O(h^4), \nonumber
			\end{align}
			
			that leads to
			\begin{align}\label{eq_deltax2_tilde_trunc_near_boundary}
				(\widetilde{\delta_x^2} u^\ast)_1 &= u_j^{(2)}(x_1) - \frac{h^3}{360}{u^{(5)}_{2}}^\ast + O(h^4), \\
				(\widetilde{\delta_x^2} u^\ast)_{N-1} &= u_j^{(2)}(x_{N-1}) + \frac{h^3}{360}{u^{(5)}_{N-2}}^\ast + O(h^4) . \nonumber
			\end{align}
			
		\subsection{Third-order discrete derivative operator $\delta_x^3$}
		\label{trunc_3rd_deriv}
			We can write this operator (defined in \eqref{eqdef3rd}) as follows.
			\begin{align*}
				(\delta_x^3 u^\ast)_j &= ((2 \delta_x^2 \widetilde{\delta_x} - \delta_x \widetilde{\delta_x^2}) u^\ast)_j \\
				&= ((2 \delta_x^2 \widetilde{\delta_x} - 2 \delta_x \delta_x^2 + \delta_x \delta_x \widetilde{\delta_x}) u^\ast)_j \\
				&= (((2 \delta_x^2 + \delta_x \delta_x) \widetilde{\delta_x} - 2\delta_x \delta_x^2) u^\ast)_j, \quad \forall j\in\{2,...,N-2\}.
			\end{align*}
			
			 Using the Taylor expansion \eqref{eq_taylor} together with \eqref{eq_delta_op} and \eqref{eq_delta2_op}, the last term in the right-hand side of the equation can be evaluated as
			\begin{align}\label{eq_dx_dx2}
				(\delta_x \delta_x^2 u^\ast)_j &= \frac{u^\ast_{j+2}-2u^\ast_{j+1}+2u^\ast_{j-1}-u^\ast_{j-2}}{2h^3} = u^{(3)}(x_j) + \frac{h^2}{4} u^{(5)}(x_j) \\ &+ O(h^4), \quad \forall j\in\{2,...,N-2\}. \nonumber
			\end{align}
			
			If $v:\Omega\to \RR$ is a smooth function then
			\begin{align*}
				(\delta_x \delta_x v^\ast)_j =& \frac{v^\ast_{j+2}-2v^\ast_j+v^\ast_{j-2}}{4h^2}=v^{(2)}(x_j)+\frac{h^2}{3}v^{(4)}(x_j)+O(h^4), \\
				&\forall j\in\{2,...,N-2\}.
			\end{align*}
			
			The above equation, combined with Equation \eqref{eq_deltax2_taylor} yields
			\begin{align*}
				((2 \delta_x^2 + \delta_x \delta_x) v^\ast)_j =& \frac{v^\ast_{j-2} + 8 v^\ast_{j-1} - 18 v^\ast_j + 8 v^\ast_{j+1} + v^\ast_{j+2}}{4 h^2} \\
				=& 3 v^{(2)}(x_j) + \frac{h^2}{2} v^{(4)}(x_j) + O(h^4), \\
				&\forall j\in\{2,...,N-2\}.
			\end{align*}
			
			Let $v=u^{(1)}$ in the above equation. Invoking \eqref{eq_her_der_trunc_error} leads to
			\begin{align} \label{eq_2dx2+dx_dx}
				((2 \delta_x^2 + \delta_x \delta_x) \widetilde{\delta_x} u^\ast)_j =& 3 u^{(3)}(x_j) + \frac{h^2}{2} u^{(5)}(x_j) + O(h^4), \\
				&\forall j\in\{2,...,N-2\}. \nonumber
			\end{align}
			
			Finally, combining Equations \eqref{eq_dx_dx2} and \eqref{eq_2dx2+dx_dx} we get
			\begin{align} \label{eq_deltax3_trunc}
				(\delta_x^3 u^\ast)_j &= [3 u^{(3)}(x_j) + \frac{h^2}{2} u^{(5)}(x_j) + O(h^4)] \\
				&-2[u^{(3)}(x_j) + \frac{h^2}{4}u^{(5)}(x_j) + O(h^4)] \nonumber \\
				&= u^{(3)}(x_j) + O(h^4), \quad \forall j\in\{2,...,N-2\}. \nonumber
			\end{align}
	
			We are still missing the truncation errors of $\delta_x^3$ at the near-boundary points $j=1,\,N-1.$ They are derived below in Subsection ~\ref{trunc_3rd_deriv_near boundary}.
			
		\subsection{Fourth-order discrete derivative operator $\delta_x^4$}
		\label{trunc_4th_deriv}
		
			Recall the definition ~\eqref{eq_delta4_op} of the discrete biharmonic operator:
			\begin{equation*}
				(\delta_x^4 \fv)_j = \frac{12}{h^2}\Big[(\delta_x \widetilde{\delta_x} \fv)_j - (\delta_x^2 \fv)_j\Big], \quad \forall j\in\{1,...,N-1\}.
			\end{equation*}
			Let $u : \Omega \rightarrow \mathbb{R}$ be a smooth function. In view of
			 Equations ~\eqref{eq_deltax2_taylor} and ~\eqref{eq_deltax_deltax_tilde_u}, we get
			\begin{align} \label{eq_deltax4_trunc}
				(\delta_x^4 u^\ast)_j = \frac{12}{h^2}(&u^{(2)}(x_j) + \frac{h^2}{6} u^{(4)}(x_j) + \frac{h^4}{360}u^{(6)}(x_j) + O(h^6)\\
				-&u^{(2)}(x_j) - \frac{h^2}{12} u^{(4)}(x_j) - \frac{h^4}{360} u^{(6)}(x_j) + O(h^6)) \nonumber \\
				=&u^{(4)}(x_j) + O(h^4), \quad \forall j\in\{2,...,N-2\}. \nonumber
			\end{align}
			
			At near-boundary points we use Equation ~\eqref{eq_deltax_deltax_tilde_u_near_boundary} instead of ~\eqref{eq_deltax_deltax_tilde_u}. The expansion of $\delta_x \widetilde{\delta_x} u^\ast$ at $x_1, x_{N-1}$ contains a third-order term which does not cancel out, so instead of Equation \eqref{eq_deltax4_trunc} we have
			\begin{align} \label{eq_deltax4_trunc_near_boundary}
				(\delta_x^4 u^\ast)_1 =& u^{(4)}(x_1) + O(h), \\
				(\delta_x^4 u^\ast)_{N-1} =& u^{(4)}(x_{N-1}) + O(h) .\nonumber
			\end{align}
			
		\subsection{Truncation errors at the boundary}
		\label{trunc_boundary}
		
			We now examine the truncation error of the higher-order derivatives $u^{(2)}, u^{(3)}$ and $u^{(4)}$ at boundary points $x_0$ and $x_N$. We treat the boundary point $x=0,$ the treatment at $x=1$ being completely analogous.\\

			Recall that the system ~\eqref{eq_boundary_ode} - \eqref{eq_boundary_lincomb}, yields the approximate boundary values
			
			\begin{equation} \label{eq3bdryval}
				\fU_0 = \begin{pmatrix}
					{\mathfrak{u}^{(2)}_{0}} \\
					{\mathfrak{u}^{(3)}_{0}} \\
					{\mathfrak{u}^{(4)}_{0}}
				\end{pmatrix}.
			\end{equation}

			The system can be written in matrix form as
			\begin{equation}
				\alpha \fU_0 = \mathfrak{b},
			\end{equation}
			where
			\begin{equation*}
				\alpha = \begin{pmatrix}
					A_0^\ast & D_0^\ast & 1\\
					0 & \frac{1}{2h} & \frac{1}{6}\\
					\frac{2}{h^2} & \frac{1}{2h} & 0
				\end{pmatrix}, \quad
				\mathfrak{b} = \begin{pmatrix}
					f_0^\ast \\
					\frac{1}{2h} (\delta_x^3 \mathfrak{u})_2 - \frac{1}{6} (4 (\delta_x^4 \mathfrak{u})_1 + (\delta_x^4 \mathfrak{u})_2)\\
					(\delta_x^4 \mathfrak{u})_1 - \frac{2}{h^2} ((\widetilde{\delta_x^2} \mathfrak{u})_2 - 2(\widetilde{\delta_x^2} \mathfrak{u})_1) + \frac{1}{2h} (\delta_x^3 \mathfrak{u})_2
				\end{pmatrix}.
			\end{equation*}

			Now let
			\begin{equation} \label{eq3bdryvalexact}
				U_0^\ast = \begin{pmatrix}
					(u^{(2)}_0)^\ast \\
					(u^{(3)}_0)^\ast \\
					(u^{(4)}_0)^\ast
				\end{pmatrix}
			\end{equation}
			be the corresponding set of boundary values of the second, third and fourth derivatives of the exact solution.

			In the vector $\fb$ we replace the approximate solution $\fu$ by the grid values $u^\ast$ of the exact solution, thus getting the vector
			\begin{equation*}
				b^\ast = \begin{pmatrix}
					f_0^\ast \\
					\frac{1}{2h} (\delta_x^3 u^\ast)_2 - \frac{1}{6} (4 (\delta_x^4 u^\ast)_1 + (\delta_x^4 u^\ast)_2)\\
					(\delta_x^4 u^\ast)_1 - \frac{2}{h^2} ((\widetilde{\delta_x^2} u^\ast)_2 - 2(\widetilde{\delta_x^2} u^\ast)_1) + \frac{1}{2h} (\delta_x^3 u^\ast)_2
				\end{pmatrix}.
			\end{equation*}
			
			Let $V$ be the solution of
			\begin{equation} \label{eqVbast}
				\alpha V=b^\ast.
			\end{equation}
			The required truncation error is therefore
			\begin{equation}
				\fe= U_0^\ast-V.
			\end{equation}
			Replacing in $b^\ast$ the finite difference operators by exact derivatives, we obtain the vector
			\begin{equation}
				B^\ast=\begin{pmatrix}
					f_0^\ast \\
					\frac{1}{2h} (u^{(3)}_{2})^\ast - \frac{1}{6} \Big[4 (u^{(4)}_{1})^\ast + (u^{(4)}_{2})^\ast\Big]\\ \\
					(u^{(4)}_{1})^\ast - \frac{2}{h^2} \Big[{u^{(2)}_{2}}^\ast - 2(u^{(2)}_{1})^\ast\Big] + \frac{1}{2h} (u^{(3)}_{2})^\ast
				\end{pmatrix}.
			\end{equation}

			Notice that $\alpha$ depends only on exact values of the coefficients. Thus $U_0^\ast$ satisfies a system of the form
			\begin{equation}\label{eqU0Bast}
				\alpha U_0^\ast = B^\ast+\fr^\ast,
			\end{equation}
			where in view of the estimates ~\eqref{eq_boundary_simpson_continuous} and ~\eqref{eq_boundary_lincomb_continuous},
			\begin{equation}
				\fr^\ast = \begin{pmatrix}
					0 \\ O(h^4) \\ O(h^4)
				\end{pmatrix}
			\end{equation}
			Invoking the truncation errors obtained in Subsections \ref{trunc_2nd_deriv} - \ref{trunc_4th_deriv} above we get,
			\begin{equation*}
				B^\ast-b^\ast=\fs,
			\end{equation*}
			where
			\begin{equation*}
				\fs=\begin{pmatrix}
					0 \\ O(h) \\ O(h)
				\end{pmatrix}.
			\end{equation*}
			
			Thus, from ~\eqref{eqU0Bast} and the above estimates we have
			\begin{equation}
				\alpha U_0^\ast = b^\ast+\fw
			\end{equation}
			where
			\begin{equation*}
				\fw=\fr^\ast+\fs=\begin{pmatrix}
					0 \\ O(h) \\ O(h)
				\end{pmatrix}.
			\end{equation*}

			By the definition of the error $\fe$ we have
			\begin{equation*}
				\fe=\alpha^{-1}\fw.
			\end{equation*}

			To estimate the error it remains to evaluate the inverse of $\alpha.$
			\begin{gather*}
				\alpha^{-1} = \frac{1}{12 - 4 D_0^\ast h + A_0^\ast h^2} \begin{pmatrix}
					h^2 & -6 h^2 & 6 h^2 - D_0^\ast h^3\\
					-4h & 24 h & 2A_0^\ast h^3\\
					12 & -24D_0^\ast h + 6 A_0^\ast h^2 & -6 A_0^\ast h^2
				\end{pmatrix} \\
				= \begin{pmatrix}
					O(h^2) & O(h^2) & O(h^2)\\
					O(h) & O(h) & O(h^3)\\
					O(1) & O(h) & O(h^2)
				\end{pmatrix}.
			\end{gather*}
			
			Thus
			\begin{equation} \label{boundary_values_truncation_error}
				\fe = \begin{pmatrix}
					O(h^3) \\ O(h^2) \\ O(h^2)
				\end{pmatrix}.
			\end{equation}
			
			\textbf{Summary.} At the boundary points $j=0,\,N$ the calculated values of the derivatives are subject to truncation errors as follows.
		
			\begin{itemize}
				\item The truncation error for the second-order discrete derivative $(\widetilde{\delta_x^2} u^\ast)_{0,N}$ is $O(h^3).$
				
				\item The truncation error for the third-order discrete derivative $(\delta_x^3 u^\ast)_{0,N}$ is $O(h^2).$

				\item The truncation error for the fourth-order discrete derivative $(\delta_x^4 u^\ast)_{0,N}$ is $O(h^2).$
			\end{itemize}

			The numerical results presented in Section ~\ref{secnumerresults} agree with the analytical bounds derived here.

			
		\subsection{Truncation errors of the third-order discrete derivative at near-boundary points}
			\label{trunc_3rd_deriv_near boundary}

			In Subsection ~\ref{trunc_3rd_deriv} we established the optimal $O(h^4)$ truncation errors for $\delta_x^3$ at interior points $j\in\set{2,3,\ldots,N-2}.$ In addition, in Subsection ~\ref{trunc_boundary} we obtained the truncation rate of $O(h^2)$ for the boundary values ($j=0,\,N$). We now conclude the study of the truncation errors by considering the remaining near-boundary points ($j=1,\, N-1$).

			Note that in view of ~\eqref{eq_deltax2_tilde_trunc_near_boundary} and ~\eqref{boundary_values_truncation_error} the truncation error for second-order discrete derivative is $O(h^3)$ at boundary and near-boundary points.
			\begin{equation}\label{eq_deltax2_tilde_trunc_full}
				(\widetilde{\delta_x^2} u^\ast)_j = u_j^{(2)}(x_j) + O(h^3), \quad j=0,1,\,N-1,N.
			\end{equation}
			Invoking Equation ~\eqref{eq_deltax_taylor} yields
			\begin{equation*}\label{eq_3rd_deriv_approx1}
				(\delta_x \widetilde{\delta_x^2} u^\ast)_j = u^{(3)}(x_j) + O(h^2), \quad j=1,N-1.
			\end{equation*}
			Also by Equation ~\eqref{eq_deltax2_taylor}, Equation ~\eqref{eq_her_der_trunc_error} and the boundary conditions $u^{(1)}(x_0)= u^{(1)}(x_N)=0$
			\begin{equation*}\label{eq_3rd_deriv_approx2}
				(\delta_x^2 \widetilde{\delta_x} u^\ast)_j = u^{(3)}(x_j) + O(h^2), \quad j=1,\,N-1.
			\end{equation*}
			As mentioned above, the truncation error is optimal $O(h^4)$ at interior points $j\in\set{2,3,\ldots,N-2}.$
			However, the last two estimates imply, by the definition of $\delta_x^3$ ~\eqref{eq_delta3_op}, that
			\begin{equation}\label{eq_deltax3_trunc_full}
				(\delta_x^3u^\ast)_j= u^{(3)}(x_j) + O(h^2), \quad j=1,\,N-1.
			\end{equation}
			Thus its truncation error is $O(h^2)$ at near-boundary points $j=1,\,N-1$, like the truncation error at the boundary.

	\bibliographystyle{abbrv}

\begin{thebibliography}{}
		\bibitem{agarwal} R.P. Agarwal and Y. M. Chow, Iterative methods for a fourth order boundary value problem, J. Comput. Appl. Math. {\bf 10} (1984), 203--217.
		
		\bibitem{book}
		M. Ben-Artzi, J.-P. Croisille and D. Fishelov,
		{\sl ``Navier-Stokes Equations in Planar Domains''}, Imperial College Press, 2013.
		
		\bibitem{Time_Evolution}
		M. Ben-Artzi, J.-P. Croisille and D. Fishelov,
		Time evolution of discrete fourth-order elliptic operators,
		Numer. Meth. PDE {\bf 35} (2019), 1429--1457.
		
		\bibitem{BACFishelovSINUM}
		M. Ben-Artzi, J.-P. Croisille and D. Fishelov, Convergence of a compact scheme for the pure streamfunction formulation of the unsteady Navier-Stokes system, SIAM J. Numer. Anal. {\bf 44} (2006), 1997--2024.
		
		\bibitem{IMA_SL_paper}
		M. Ben-Artzi, J.-P. Croisille, D. Fishelov and R. Katzir,
		Discrete fourth-order Sturm-Liouville problems, IMA J. Numer. Anal. {\bf 38} (2018), 1485-1522. doi: 10.1093/imanum/drx038
		
		\bibitem{Navier_Stokes}
		M. Ben-Artzi, J.-P. Croisille, D. Fishelov and S. Trachtenberg,
		A pure-compact scheme for the streamfunction formulation of Navier–Stokes equations.
		J. Comput. Phys. {\bf 205} (2005), 640-664.
		
		\bibitem{Splines}
		M. Ben-Artzi and G. Katriel,
		Spline functions, the discrete biharmonic operator and approximate eigenvalues.
		Numer. Mathematik {\bf 141} (2019), 839--879.
		
		\bibitem{coddington}
		E. A. Coddington and N. Levinson, {\sl ``Theory of Ordinary Differential Equations''}, McGraw Hill, 1955.
		
		\bibitem{Collatz}
		L. Collatz,
		{\sl ``The Numerical Treatment of Differential Equations''} (3rd ed., P. G. Williams Trans.), Berlin, Springer (1960).
		
		
		\bibitem{WE}
		W.~E and J.-G. Liu,
		Essentially compact schemes for unsteady viscous incompressible flows,
		J. Comput. Phys. {\bf 126} (1996),~122--138.

		\bibitem{erturk} V. S. Ert\"{u}rk and S. Momani, Comparing numerical methods for solving fourth-order boundary value problem, Appl. Math. Comp.{\bf 188} (2007), 1963--1968.
		
		\bibitem{evans}
		L. C. Evans, {\sl``Partial Differential Equations''}, American Mathematical Society, 1997.
		
		\bibitem{optimal-paper}
		D. Fishelov, M. Ben-Artzi, and J.-P. Croisille, Recent advances in the study of a fourth-order compact scheme for the one-dimensional biharmonic equation, J. Sci. Comput. {\bf 53} (2012), 55--79.
		
		\bibitem{spline1}
		P. Kalyani, A.S. M. Rao and P.S. R. C. Rao,
		Numerical solution of fourth order boundary value problem using sixth degree spline functions, J. Phys.: Conference Series 662, (2015).
		
		\bibitem{RokhlinSecondKindIntegral}
		W. Leeb and V. Rokhlin,
		On the numerical solution of fourth-order linear two-point boundary value problems.
		arXiv:1711.05354v2 [cs.NA] (2018).
		
		\bibitem{Naimark_book}
		M. A. Na\u{i}mark. {\sl ``Linear Differential Operators''} (E. R. Dawson Trans.), Frederick Ungar Publishing, 1967.
		
		\bibitem{Galerkin2005}
		J. Rashidinia and A. Golbabaee,
		Convergence of numerical solution of a fourth-order boundary value problem, Appl. Math. and Comp. {\bf 171} (2005), 1296-1305.
		
		\bibitem{spline2}
		S. S. Siddiki and G. Akramm,
		Quintic spline solutions of fourth order boundary value problems, Inter. J. Numer. Anal. Modeling {\bf 5} (2008), 101-111
		
		\bibitem{Galerkin2010}
		K.N.S.K. Viswanadham, P.M. Krishna and R.S. Koneru,
		Numerical solutions of fourth order boundary value problems by Galerkin method with quintic B-splines,
		Inter. J. Nonlin. Science {\bf 10}(2010), 222-230.
	
		\bibitem{xu}
		L. Xu,
		The variational iteration method for fourth order boundary value problems,
		Chaos,Solitons, Fractals {\bf 39} (2009),1386--1394.
	\end{thebibliography}

	\addresseshere

\end{document}